\documentclass[11pt]{article}
\usepackage{amsfonts}
\usepackage{amscd}
\usepackage{amsmath,amssymb,amsthm}
\usepackage{amstext}
\usepackage{amscd}
\usepackage{graphicx}
\usepackage[all]{xy}

\theoremstyle{plain}
\newtheorem{lem}{Lemma}[section]

\newtheorem*{embedded}{Theorem A}

\newtheorem*{main1}{Theorem 1}
\newtheorem*{main2}{Theorem 2}

\newtheorem{cla}[lem]{Claim}
\newtheorem{coro}[lem]{Corollary}
\newtheorem{prop}[lem]{Proposition}

\theoremstyle{definition}

\newtheorem{definition}[lem]{Definition}
\newtheorem{rems}[lem]{Remarks}
\newtheorem{rem}[lem]{Remark}

\renewcommand{\descriptionlabel}[1]%
       {\hspace{\labelsep}\textsf{#1}}

\DeclareMathOperator{\dem}{dem}

\DeclareMathOperator{\Int}{Int}

\begin{document}
\title{Wild knots embedded in the Menger sponge \,\,\thanks{{\it 2020 Mathematics Subject Classification.}
   Primary 57M30, 37B02. Secondary 30F40. 
{\it Key Words.} Wild knots, Sierpi\'nski carpet, Menger sponge.  A.V. would like to acknowledge DGAPA, Universidad Nacional Aut\'onoma de
M\'exico. Proyecto PAPIIT IN103324, for the financial support.}}

\author{Gabriela Hinojosa, Ulises Morales-Fuentes, \\ Rogelio Valdez, Alberto Verjovsky}
\date{August 31, 2026}

\maketitle

\begin{abstract} 
In this paper, we provide explicit recursive constructions of infinitely many non-equivalent wild knots contained in the Menger sponge, in such a way that we can control their set of wild points, which lies in a usual Cantor set contained in the Menger sponge. Furthermore, we show that wild knots of dynamically defined type arising from Kleinian group actions can be isotoped into the sponge.  We want to emphasize that our approach is constructive and geometric. 
\end{abstract}

\section{Introduction}

In 1926, Karl Menger introduced his fractal sponge, known as the Menger sponge or Menger cube. This object generalizes the one-dimensional Sierpi\'ns\-ki carpet in the unit square and the one-dimensional Cantor set. The Menger sponge is constructed iteratively. Start with a unit cube $I^3$ and divide it into 27 smaller cubes of side length $\frac{1}{3}$. Remove the interior of the central cube (the one at the center) and also remove the interior of the cubes at the centers of each face. This step leaves 20 smaller cubes for the first stage (level). Next, repeat the process on each of the 20 remaining cubes to reach the second stage. Continue iterating this process. The Menger sponge $M$ is the inverse limit of this construction. Menger showed that the sponge is universal for all compact one-dimensional topological spaces. In particular, every curve is homeomorphic to a subset of the Menger sponge.\\



\noindent In 2024, three high schoolers --Niko Voth, Joshua Broden, and Noah Na\-za\-reth-- and their mentor Malors Espinosa proved again, but in a constructive way, that 
any tame knot $K$ can be embedded into a finite iteration of the Menger sponge (\cite{BENV}). \\

\noindent In this paper, we extend the above to wild knots. We provide explicit recursive constructions of infinitely many non-equivalent wild knots contained in the Menger sponge, in such a way that we can control their set of wild points that lies in a usual Cantor set contained in the Menger sponge. We aim to prove:

\begin{main1}\label{main1}
For every finite subset $T=\{p_1,\ldots,p_n\}$ of the standard Cantor set $\mathcal{C}$ contained in the edge $e\subset M$ described in Section~3.2, there is an explicit, recursively constructed wild knot $\mathcal{K}_T\subset M$ whose set of wild points is exactly $T$. Consequently, the Menger sponge $M$ contains infinitely many pairwise non-equivalent, explicitly constructed wild knots, distinguished by the cardinality of their wild point sets.
\end{main1}

\noindent We were unaware that non-recursive general existence results for these subjects had previously been established in Russia. In 1971, M. A. Shtan'ko resolved Menger's universality problem for compacta in Euclidean space (\cite{shtan'ko}, \cite{daverman}). Specifically, Shtan'ko demonstrated that the Menger sponge is ambient isotopically universal for 1-dimensional compacta in Euclidean 3-space such that its demension is also one\footnote{\emph{Demension} (embedding dimension) is specialized terminology from decomposition-space theory, introduced by R.\,D. Edwards \cite{edwards} (see also Daverman--Venema \cite{daverman}). It is not in wide use outside this corner of geometric topology, so we recall its meaning for the case needed here -- compacta of dimension at most one, the only case used in this paper. A compactum $X\subset\mathbb{S}^3$ has embedding dimension one, $\dem(X)=1$, if $X=\bigcap_k H_k$ for some nested sequence of compact handlebodies $H_k\subset\mathbb{S}^3$ such that, for every $\varepsilon>0$, all sufficiently deep $H_k$ admit an $\varepsilon$-retraction onto a finite graph inside $H_k$; this is exactly the refined notion of tameness for which Shtan'ko's universality theorem is proved, and it is a priori different from the ordinary topological (covering) dimension $\dim$, which is why we keep the two symbols distinct throughout the paper. Every compactum of topological dimension $\leq1$ that arises in this paper -- in particular, every knot, tame or wild -- has demension one as well, by the approximation theorem mentioned in Section 2.1 below, which is why Theorem A and Theorems 1--2 may legitimately invoke Shtan'ko's theorem.} (\cite{shtan'ko}, \cite{daverman}).
As a consequence of this theorem, any knot (tame or wild) embeds isotopically into the Menger sponge, since a knot, being a topological circle, has demension one. We want to underscore that the proof of this theorem shows the existence of such an isotopy. However, there are examples of compact one-dimensional spaces that are not isotopic to a subset of the Menger sponge; for more details, see \cite{mcmillan}.\\

\begin{rem}[Relation to Shtan'ko's theorem]\label{shtankorelation}
The bare existence statement ``$M$ contains infinitely many pairwise non-equivalent wild knots'' already follows non-constructively from Shtan'ko's theorem together with the classical fact that $\mathbb{S}^3$ contains, for every $n\in\mathbb{N}$, a wild knot with exactly $n$ wild points (cf.\ the Fox--Artin type constructions): each such knot has demension one, hence embeds ambient-isotopically in $M$ by Shtan'ko's theorem, and knots with different numbers of wild points are pairwise non-equivalent. Shtan'ko's theorem, however, gives no information about \emph{which} isotopic copy of a given wild knot lands in $M$, no formula for the embedding, and no control on which points of $M$ become the wild points of the image. The content of Theorem 1 in the sharper, constructive form stated above -- and of Theorem 2 -- is precisely this missing information: an explicit recursive construction, compatible stage by stage with the construction of $M$ itself (Section~3.2), that produces a wild knot inside $M$ whose wild point set is \emph{exactly} a prescribed finite subset of a fixed tame Cantor set in $M$, together with an explicit isotopy embedding any dynamically defined wild knot into $M$ with the same control. This explicit, geometric control -- not the bare existence statement -- is the paper's contribution beyond Shtan'ko's theorem.
\end{rem}

\noindent Shtan'ko's work introduced concepts that later became foundational in geometric topology and decomposition theory. Notably, certain isotopy constructions in Shtan'ko's paper anticipated techniques subsequently developed by R. D. Edwards in the study of cell-like maps and the cell-like approximation theorem (\cite{edwards}).\\

\noindent  We emphasize that the approach presented here is constructive and geometric. The Menger sponge serves as a fractal environment that supports recursive knot operations at infinitely many scales, enabling the construction of wild knots via iterated connected sums embedded within the sponge. Because these results are constructive and concrete, it is possible to create simulations that visualize the knots, which is a significant advantage of contemporary computational tools.  \\

\noindent Furthermore, we show that wild knots of dynamically defined type arising from Kleinian group actions can be isotoped into the Menger sponge. More specifically, J. P. D\'iaz and G. Hinojosa in \cite{DH} constructed {\it wild knots of dynamically defined type} as follows. Consider a tame knot $K$. An $n$-beaded necklace $T^{\circ}$ subordinate to $K$ consists of the union of $n$ disjoint closed round 3-balls, called pearls, $B_j$ ($j=1,\ldots, n$) whose centers lie on $K$, and such that the segment of $K$ contained in each pearl is unknotted. An $n$-pearl chain necklace $T$ is the union of $T^{\circ}$ and $K$; \emph{i.e.} $T=T^{\circ}\cup K$. Let $\Gamma_{T^{\circ}}$ be the group generated by reflections $I_{j}$ through the boundary of each pearl $\Sigma_{j}=\partial B_j$ ($j=1,\ldots,n$). Then the action of $\Gamma_{T^{\circ}}$ on $T$ yields a sequence of nested pearl chain necklaces $T_k$, whose inverse limit is the desired wild knot of dynamically defined type $\Lambda(K,T^{\circ})$. Notice that $\Lambda(K,T^{\circ})$ is wild at a Cantor set of points.\\

\noindent In this context, we have the following result.

\begin{main2}\label{main2}
Any wild knot of dynamically defined type is isotopic to a wild knot contained in the Menger sponge such that its set of wild points is contained in a tame Cantor set contained in $M$.
\end{main2}

\noindent In the last section, we consider a different, two-dimensional cubical fractal continuum containing $M$ (not to be confused with the classical, one-dimensional Sierpi\'nski carpet of the unit square), and we obtain analogous results.\\

\section{Preliminaries}

In this section, we briefly recall some definitions of knots; for more details, see \cite{rolfsen}, \cite{DH}.\\

\subsection{The Menger sponge and its universality}

\noindent The Menger sponge is a three-dimensional generalization of the one-di\-men\-sio\-nal Cantor set and the one-dimensional Sierpi\'nski carpet in the unit square. Its construction is also a recursive process (see Figure \ref{Menger}).\\

\begin{enumerate}
\item {\it First stage}. We start with the unit cube $I^3$ and subdivide each face into nine squares (as in the Sierpi\'nski carpet construction). This operation subdivides the cube into 27 subcubes. Now we remove the interior of the subcube at the center of each face and also the interior of the central subcube of $I^3$ to obtain $M_1$, which consists of the union of 20 subcubes of side length $\frac{1}{3}$.\\

\item {\it Second stage}. We repeat the operation on each of the 20 remaining cubes of the first stage, subdividing each into 27 subcubes, and again remove the interior of the subcube at the center of each face and the interior of the central subcube of each cube. This yields $M_2$, consisting of the union of $20^2$ subcubes of side length $\frac{1}{3^2}$.\\

\item {\it $k^{th}$ stage}. We repeat the same process on each cube of the $(k-1)$-th stage to obtain $M_k$, which consists of the union of $20^{k}$ subcubes of side length $\frac{1}{3^k}$. 
\end{enumerate}

\begin{figure}[h] 
\begin{center}
\includegraphics[height=4.7cm]{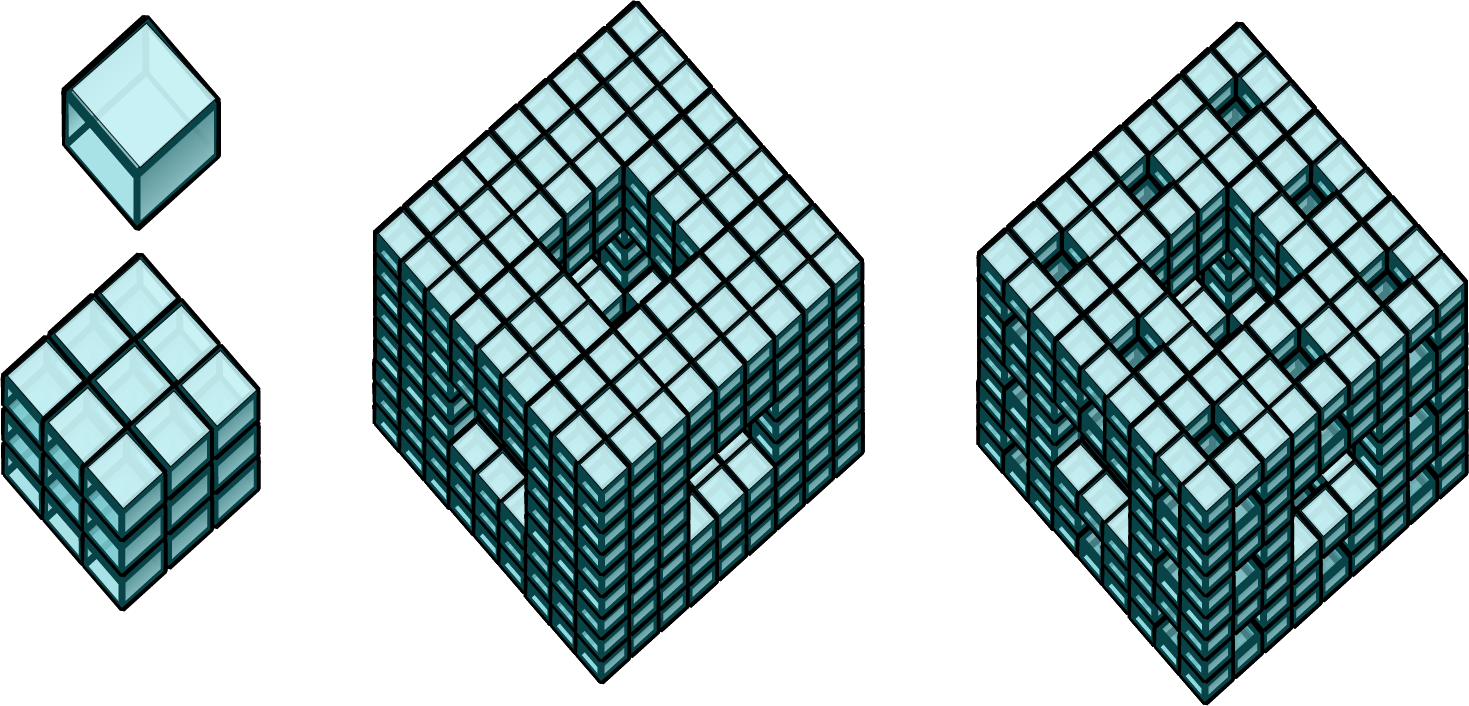}
\end{center}
\caption{\sl First steps in the construction of the Menger sponge.} 
\label{Menger}
\end{figure} 

\noindent Then the \emph{Menger sponge} is defined as the inverse topological limit
$$
M=\varprojlim_{k} M_{k}=\bigcap_{k=1}^{\infty} M_{k}.
$$

\noindent By construction, each face of $M$ is a Sierpi\'nski carpet whose area goes to zero. The sponge's Hausdorff dimension is $\frac{\log 20}{\log 3}\sim 2.727$.\\

\noindent In \cite{BENV}, the following result was proved, which will be very useful for our purpose.

\begin{lem}\label{dust-points}
If $(x,y)$ is a point in the Cantor dust, then for all $0\leq z\leq 1$ the point $(x,y,z)$ belongs to the Menger sponge $M$.
\end{lem}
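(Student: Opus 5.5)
The natural route is a digit-wise characterization of $M$. First I will establish the standard description: a point $(x,y,z)\in I^3$ lies in $M_k$ if and only if, at each scale $j=1,\ldots,k$, it lies in some closed cube of the $j$-th subdivision that was not removed. A removed cube at level $j$ is one whose position inside its parent cube has at least two of its three coordinates in the middle third. This matches the construction in Section~2.1: the central cube has all three coordinates in the middle third, and the face-centre cubes have exactly two. I will phrase this through ternary expansions. Write $x=0.x_1x_2\ldots$, $y=0.y_1y_2\ldots$, $z=0.z_1z_2\ldots$ in base $3$. Then $(x,y,z)\in M$ if and only if the expansions can be chosen so that for no index $j$ are two or more of $x_j,y_j,z_j$ equal to $1$. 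The ambiguity of ternary expansions at endpoints has to be handled: points on the boundary of a removed open cube are kept, which is why the statement refers to closed cubes and why one must allow either expansion.

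Second, I will recall that a point $(x,y)$ of the Cantor dust $\mathcal C\times\mathcal C$ has $x,y\in\mathcal C$. Each of $x$ and $y$ therefore admits a ternary expansion using only the digits $0$ and $2$, so $x_j\neq1$ and $y_j\neq1$ for every $j$. Hence for every $j$, at most one of $x_j,y_j,z_j$, namely $z_j$, can equal $1$, whatever expansion of $z$ is chosen. By the characterization, $(x,y,z)\in M_k$ for all $k$, and so $(x,y,z)\in M$.

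To avoid delicate expansion bookkeeping, I would instead argue by induction on $k$ with cubes directly. Assume $(x,y,z)$ lies in a closed level-$(k-1)$ cube $Q\subset M_{k-1}$ with $(x,y)$ in a closed level-$(k-1)$ square $S$ of the $(k-1)$-th dust stage, and $Q=S\times J$ for a closed interval $J$ of length $3^{-(k-1)}$. At the next level, $(x,y)$ lies in one of the four corner subsquares $S'$ of $S$, and $z$ lies in one of the three closed thirds $J'$ of $J$. The subcube $S'\times J'$ has its first two positional indices in $\{0,2\}$, so at most one index equals $1$. It is therefore neither the central cube nor a face-centre cube, and it survives in $M_k$. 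For the induction to be valid, $Q$ must genuinely be of the product form $S\times J$ with $S$ a dust square. This holds because $S'$ is again a dust square of the next stage, so the invariant propagates.

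The only real obstacle is confirming that the removed cubes are exactly those with at least two middle indices, together with the endpoint issues for closed versus open cubes. The induction on closed product cubes sidesteps the latter entirely, since $z$ always lies in some closed third.
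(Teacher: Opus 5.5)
Your proof is correct. The paper does not prove this lemma itself; it imports it from the work of Broden, Espinosa, Nazareth and Voth. The only hint of the intended reasoning is the ternary-digit remark in the proof of Lemma \ref{curve} (``$x=1=0.\overline{2}_3$ and $y\in C$ both have ternary expansions avoiding the digit $1$'') and the similar remark in the sketch of Theorem A. Your first route is exactly that digit argument.

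For the digit route, only the easy direction of your characterization is needed. Chosen expansions of $x,y,z$ determine a nested chain of closed cubes, and if no position carries two $1$'s, every cube in that chain is retained. Your insistence that the expansions ``can be chosen'' is genuinely necessary, not cosmetic. The point $x=2/3\in\mathcal C$ has $0.2$ as its only expansion in the digits $\{0,2\}$. With its non-terminating expansion $0.1\overline{2}$, the point $(2/3,1/3,1/2)$ would appear to have two $1$'s in the first position, yet it lies in $M$ by the lemma.

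Your second route, the induction on product cubes, is the cleaner of the two and avoids expansions entirely. Stated globally, it proves $C_k\times C_k\times[0,1]\subset M_k$ for every $k$, where $C_k$ is the $k$-th stage of the Cantor set. Over each stage-$k$ dust square, the whole column splits into $3^k$ closed cubes of side $3^{-k}$. Every ancestor of each such cube has its first two positional indices in $\{0,2\}$, so each of them is a constituent cube of $M_k$. Intersecting over $k$ gives the lemma.

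One informal remark in your first paragraph is inaccurate, though your argument never uses it. It is not true that points on the boundary of a removed cube are kept. For instance, $(1/2,1/2,0)$ lies on the boundary of the removed face-centre cube but not in $M_1$, consistent with the faces of $M$ being Sierpi\'nski carpets. The same holds for $(1/2,1/2,1/3)$ on the wall between the central cube and a face-centre cube. What survives at each stage is exactly the union of the retained \emph{closed} cubes, and that is the criterion your induction verifies.
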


\noindent The Menger sponge is one of the most remarkable fractal objects in topology. Menger showed that it is a universal one-dimensional compactum, \emph{i.e.}, it contains homeomorphic copies of all compact metric spaces of dimension one. Later, in 1971, M. A. Shtan'ko proved that the Menger sponge is ambient isotopy universal for 1-dimensional compacta in Euclidean 3-space (\cite{shtan'ko}, \cite{daverman}). \\

\noindent {\bf Theorem} (Shtan'ko). \emph{Let $K\subset\mathbb{R}^3$ be compact with $\dim K=r\leq 1$. Then there exists an ambient isotopy $F_t:\mathbb{R}^3\rightarrow\mathbb{R}^3$ such that $F_1 (K)\subset M$ if and only if $\dem (K)=r$, where $\dem (K)$ denotes the embedding dimension (demension) of $K$.}\\

\noindent The proof of this theorem splits into two parts. The first is to show, using a geometric isotopy argument, that any compact of demension $1$ can be pushed into a modified Menger compactum. The second one is to use a reduction theorem to show that the modified Menger compactum is ambient isotopic to the standard Menger compactum. However, to apply the theorem to arbitrary compacta of dimension 1, one needs an additional ingredient: an approximation theorem ensuring that the demension equals the dimension. As a consequence of this theorem, we have that any knot (tame or wild) embeds isotopically into the Menger sponge, since its embedding dimension is one.

\subsection{Knots}
 
\noindent A topologically embedded $1$-sphere $K\subset\mathbb{S}^{3}$ is called a {\it topological knot} or {\it 1-knot}. Given two 1-knots $K$, $L:\mathbb{S}^1\rightarrow \mathbb{S}^3$, we say that $K$ is \emph{equivalent} to $L$ if there exists a homeomorphism $\varphi:\mathbb{S}^3\rightarrow\mathbb{S}^3$ such that $\varphi(K)=L$. A 1-knot is {\it tame} if it has a polygonal representative in its ambient isotopy class (see Figure \ref{T1}).\\

\begin{figure}[h] 
\begin{center}
 \includegraphics[height=2.8cm]{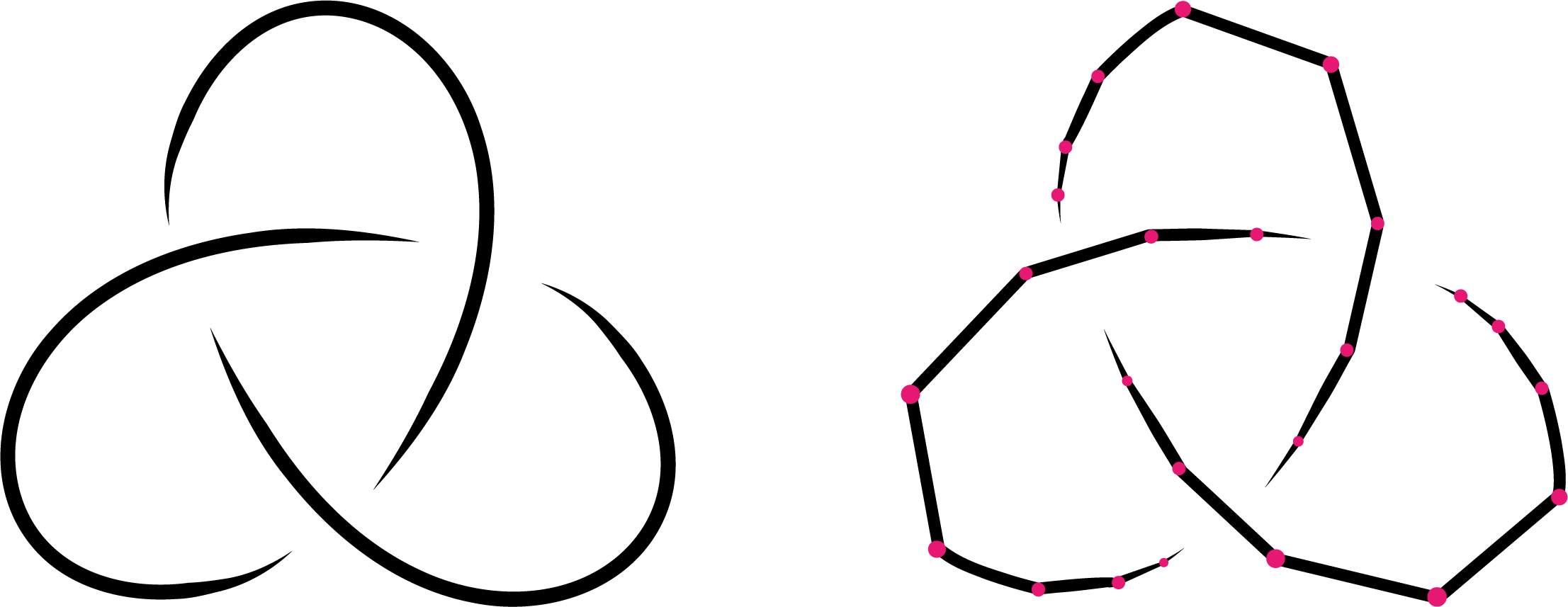}
\end{center}
\caption{\sl A tame knot and its polygonal representative.} 
\label{T1}
\end{figure}

\begin{definition}
We say that a point $x\in K$ is {\it locally flat} or {\it locally tame} if there exist an open neighborhood $U$ of $x$ and a homeomorphism of pairs $(U,U\cap K)\sim (\Int(\mathbb{B}^{3}),\Int(\mathbb{B}^{1}))$, where $\mathbb{B}^{1}\subset\mathbb{R}^1$ is the unit closed $1$-ball (see Figure \ref{flatpt}). Otherwise, $x$ is called a {\it wild} point of $K$. A knot $K$ is {\it locally flat} or {\it locally tame} if all of its points are locally flat. Otherwise, we say $K$ is a {\it wild knot}.
\end{definition}

\begin{figure}[h] 
\begin{center}
\includegraphics[height=2.8cm]{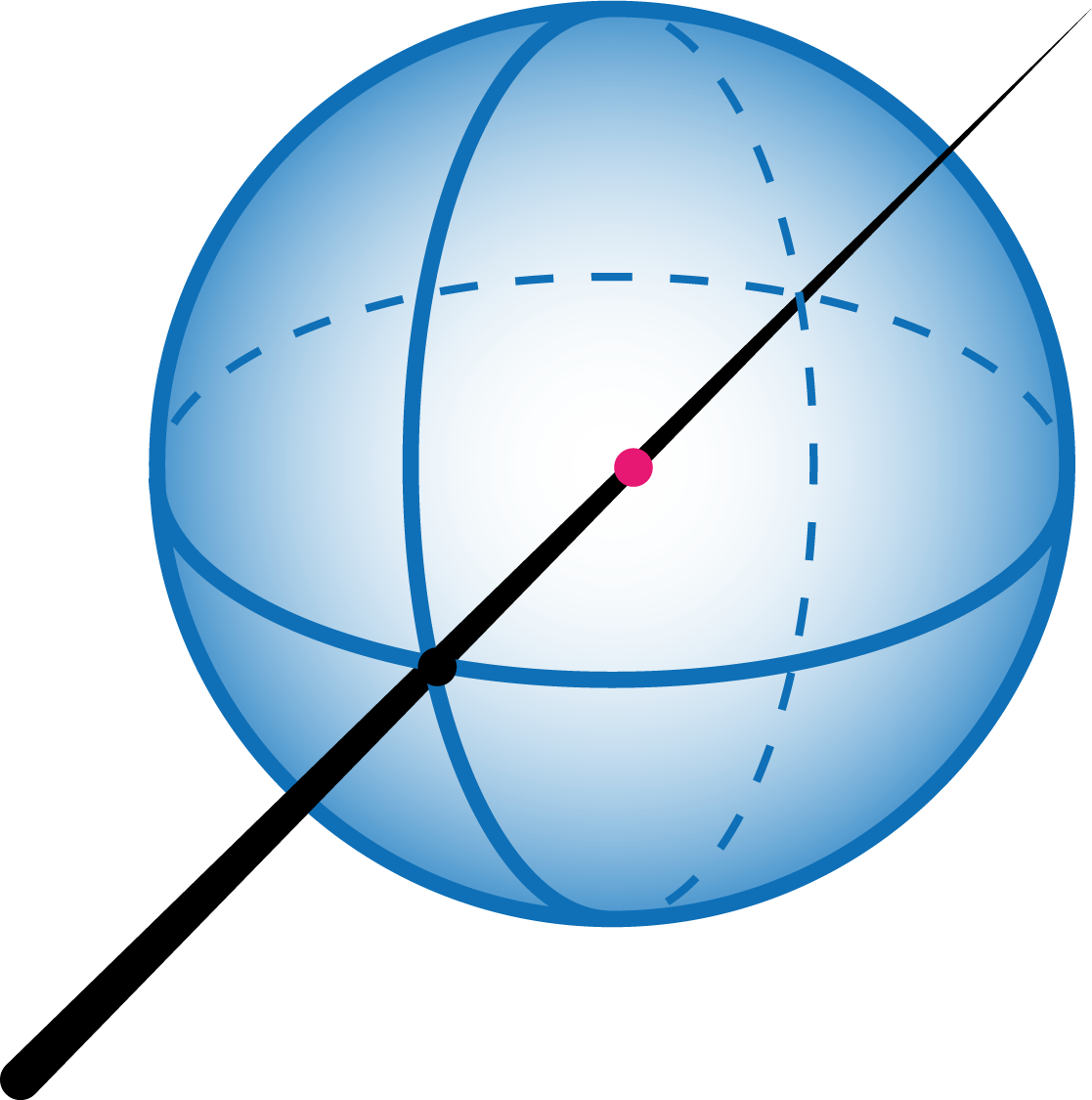}
\end{center}
\caption{\sl A locally tame point.}
\label{flatpt}
\end{figure}

\begin{rem}
If the fundamental group $\pi_1(\mathbb{S}^3-K)$ is infinitely generated, then it can be shown that the knot $K:\mathbb{S}^1\to\mathbb{S}^3$ is wild.
\end{rem}

\noindent We can extend the above notion to any embedding (see \cite{rushing}).

\begin{definition}
Let $X$ be a polyhedron and let $Y$ be a PL-manifold. An embedding $X\rightarrow Y$ is said to be a \emph{tame embedding} if it is equivalent to a PL-embedding; otherwise, it is called a \emph{wild embedding}.
\end{definition}

\noindent {\bf The Arc Presentation}\\

\noindent As in \cite{BENV}, we will also use an arc presentation in grid form for a given tame knot $K$, so we briefly recall it (see \cite{BENV}). An arc presentation in grid form is encoded in an ordered list of $n$ unordered pairs $\{a_1,b_1\},\,\dots,\,\{a_n,b_n\}$, where $a_1,\dots,a_n$ and $b_1,\dots,b_n$ are permutations of the integers $1,\dots,n$ and $a_i\neq b_i$ for $i=1, \dots, n$. Now consider a finite grid of points $(a,b)$, $1\leq a,\,b\leq n$, consisting of $n^2$ points in the lattice $\mathbb{Z}^2$ of $\mathbb{R}^2$, and construct the presentation as follows:

\begin{enumerate}
\item {\it Step 1.} For each $i$ ($1\leq i\leq n$) draw the horizontal line segment joining the point $(a_i,i)$ with the point $(b_i,i)$.
\item {\it Step 2.} Each vertical line at $j$ ($j=1,\,2,\,\ldots,n$) has exactly two points drawn on it. Draw the line segment joining them for each $j$.
\item {\it Step 3.} At each crossing, always draw the corresponding vertical segment above the horizontal one.
\end{enumerate}

\noindent The minimal $n$ for which the above construction can be carried out is called the \emph{arc index of $K$}, and it is denoted by $\alpha (K)$. P. R. Cromwell proved in \cite{cromwell} the following result (compare \cite{BENV}).

\begin{prop}
Every tame knot admits an arc presentation in grid form. Furthermore, there is an algorithm to construct the arc presentation from any other knot diagram of the knot.
\end{prop}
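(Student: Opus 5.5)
We prove the proposition in two steps. First, every tame knot has a polygonal representative, which projects to a regular diagram. Second, that diagram can be converted into an arc presentation in grid form by explicit local moves on the diagram, each of which preserves the knot type.

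\emph{Step 1: an orthogonal diagram.} Take a regular diagram $D$ of $K$: a polygonal projection to $\mathbb{R}^2$ with finitely many transverse double points and no vertex over a crossing. Apply a small planar isotopy so that no two vertices share an $x$-coordinate. Then replace each polygonal edge by a ``staircase'' of horizontal and vertical segments that stays inside a thin neighbourhood of the edge. This gives a rectilinear diagram. We then fix each crossing so that the vertical strand passes over the horizontal one. Where $D$ has the horizontal strand over, we insert a local twist: a small detour of the over-strand that exchanges the roles of horizontal and vertical at that crossing. This detour is a planar realization of a Reidemeister~II/III combination, so the knot type is unchanged.

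\emph{Step 2: one segment per row and column.} Next we perturb the diagram so that no two horizontal segments lie on the same line, and likewise for vertical segments. This is done by slightly shifting each collinear segment and inserting tiny jogs, which changes no crossing information. The diagram now has finitely many horizontal lines and finitely many vertical lines, each carrying exactly one segment. We renumber these lines by their order, $1,\dots,n$. Row $i$ carries a horizontal segment with endpoints $(a_i,i)$ and $(b_i,i)$, where $a_i\neq b_i$. Each column $j$ contains exactly two endpoints, namely the ends of its unique vertical segment. Hence $a_1,\dots,a_n$ and $b_1,\dots,b_n$ are permutations of $\{1,\dots,n\}$. Because the horizontal and vertical segments alternate around $K$, every endpoint is the corner where one horizontal and one vertical segment meet, so Steps~1--3 of the construction reproduce exactly this diagram.

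\emph{Main obstacle and algorithm.} The delicate point is Step~1: we must check that the crossing-flip gadget keeps vertical-over-horizontal everywhere and does not create new crossings of the wrong type. We would handle this by putting the gadget inside a small square around the crossing. Inside that square the wrong crossing is replaced by a rerouting of the under-strand around the endpoint, a cyclic-permutation trick on the grid as in Cromwell's argument. Every stage is an explicit finite procedure on the input diagram: subdividing edges, the local gadgets, the perturbations, and the final relabelling of rows and columns. Together these give the algorithm asserted in the second part of the proposition.
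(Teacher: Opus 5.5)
The paper does not prove this proposition; it quotes it from Cromwell. Your overall route is the standard one for this result: make the diagram rectilinear, force vertical-over-horizontal at every crossing, put the segments in general position, and read off rows and columns. The gap is in the one step you flag as delicate. You never actually specify the crossing-correction gadget, and the mechanisms you name do not work as stated.

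A local detour of the over-strand alone cannot succeed. In a small square around the bad crossing, the under-strand is still a vertical segment separating the two boundary points of the over-strand. So the rerouted over-strand crosses it an odd number of times, each time horizontally. The vertical-over-horizontal rule then puts the under-strand on top at every one of these crossings. That tangle is the original one with the crossing changed, which in general gives a different knot. The same parity argument rules out rerouting only the under-strand while the over-strand stays straight. The correct move is a planar isotopy, not a Reidemeister~II/III combination. Cyclic permutation of the grid (moving an extreme row or column to the other side) is a global move and has nothing to do with which strand is vertical at a given crossing.

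What is needed is a modification of \emph{both} strands that is merely a planar isotopy of the diagram. For instance, in polar coordinates centred at the bad crossing, apply inside a small disk the homeomorphism $(r,\theta)\mapsto(r,\theta+f(r))$. Here $f$ is continuous and monotone, equal to $\pi/2$ near $r=0$ and to $0$ near the boundary circle. This map:
\begin{itemize}
\item fixes the diagram outside the disk and keeps all over/under data;
\item turns the crossing a quarter turn, so the over-strand becomes vertical and the under-strand horizontal there;
\item produces four pairwise disjoint spiral arcs, so replacing them by thin staircases creates no new crossings.
\end{itemize}
Your phrase ``rerouting the under-strand around the endpoint'' can be made into an equivalent gadget, but only if both strands are modified. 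First put a short jog into the over-strand next to the crossing. Then slide the crossing along the over-strand past the jog's corner, so that the under-strand now passes horizontally beneath the jog's vertical piece.

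With this gadget in place, the rest of your argument goes through, apart from one small point in Step~2. The claim that $a_1,\dots,a_n$ and $b_1,\dots,b_n$ are permutations requires a choice of labels. Orient $K$ and let $a_i$, $b_i$ be the columns where the oriented horizontal segment of row $i$ starts and ends. The vertical segment in column $j$ joins the end of one horizontal segment to the start of the next, so each $j$ occurs exactly once among the $a_i$ and exactly once among the $b_i$.
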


\noindent J. Broden, M. Espinosa, N. Nazareth, and N. Voth proved the following result in \cite{BENV}.
\begin{embedded}
Any tame knot $K$ can be embedded into a finite iteration of the Menger sponge.
\end{embedded}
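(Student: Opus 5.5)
We build the embedding from an arc presentation of $K$ and Lemma~\ref{dust-points}. By Cromwell's proposition, $K$ has an arc presentation in grid form on an $n\times n$ grid. Choose $k$ with $2^k\ge n$. The Cantor set $C$ in $[0,1]$ contains at least $2^k$ points that are left endpoints of the $k$-th stage intervals, for instance $0,\tfrac{2}{3^k},\dots$. Pick $n$ of them, $c_1<\dots<c_n$. The plan is to replace the grid coordinate $i$ by $c_i$ via an order-preserving map. Since only the combinatorics of the arc presentation matter, the new grid (with unequal spacings) still presents $K$.

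Next we realize the diagram in three dimensions with the crossings resolved. Each vertical segment $\{x=c_j\}$, joining its two marked points, is placed at height $z=z_1$, and each horizontal segment $\{y=c_i\}$ at height $z=z_0<z_1$. At every corner $(c_j,c_i)$ we join the two heights by a vertical segment in the $z$-direction. This gives a polygonal knot in $I^3$ isotopic to $K$, since at each crossing the vertical strand lies above the horizontal one, exactly as in Step~3 of the arc presentation.

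It remains to check that each piece lies in $M$. The $z$-direction segments sit over points $(c_j,c_i)$ of the Cantor dust $C\times C$, so they lie in $M$ by Lemma~\ref{dust-points}. A horizontal segment $\{y=c_i,\ z=z_0\}$ has $x$ ranging over an interval, so we need the whole line $\{y=c_i, z=z_0\}\cap I^3$ to lie in $M$. The key observation, to be proved, is this: if $y,z\in C$, then the line $\{(x,y,z): x\in[0,1]\}$ lies in $M$. At each stage, a point is removed only if at least two of its ternary digits at that level equal $1$. When $y,z\in C$, their digits are never $1$, so no level can be removed, whatever $x$ is. By the same argument, with $x$ and $z$ in $C$ and $y$ free, the vertical segments are fine. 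So we take $z_0=0$ and $z_1=1$, or any two distinct points of $C$.

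The main obstacle is this "two digits equal $1$" characterization of $M$, together with the ambiguity of ternary expansions at endpoints. We would verify it stage by stage: a point lies in $M_k\setminus M_{k+1}$ exactly when, in its $(k+1)$-st digits, at least two coordinates lie in the open middle third. Using Cantor points whose expansions avoid $1$ handles the boundary cases. With this in hand, the whole knot lies in $M$, and in fact in $M_k$ for the finite $k$ chosen, which gives Theorem~A.
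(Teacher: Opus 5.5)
Your proposal is correct and is essentially the paper's proof: you take an arc presentation, replace the grid coordinates by Cantor endpoints, put one family of grid segments on the face $z=0$ and the other on $z=1$, and join them by $z$-direction segments over Cantor-dust points via Lemma~\ref{dust-points}. The differences are cosmetic. You justify the face segments with a direct ternary-digit criterion for $M$ (axis-parallel lines whose other two coordinates lie in $C$), where the paper cites the digit characterization of the Sierpi\'nski carpet faces; and you place the horizontal rather than the vertical segments on $z=0$, which only fixes the viewing direction for the crossing convention.
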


\noindent{\it Proof.} We provide a sketch of the proof; for more details, see \cite{BENV}. Let $K$ be a tame knot, so $K$ admits an arc presentation
$$
\{a_1,b_1\},\,\dots,\,\{a_n,b_n\}
$$
where $a_1,\,\dots,\,a_n$ and $b_1,\,\dots,\,b_n$ are permutations of $1,\,\dots,\,n$.\\

\noindent Take an iteration of the Cantor set construction (the $k$th stage) that has at least $n$ endpoints. This is possible since the $k$th iteration has $2^{k+1}$ endpoints. Among those endpoints choose $n$, say $p_1<p_2<\cdots <p_n$. Then we consider the arc presentation given by
$$
\{p_{a_1},p_{b_1}\},\,\dots,\,\{p_{a_n},p_{b_n}\}
$$ 
where $p_{a_i}$ is placed on the horizontal axis of the front face, and $p_{b_j}$ on the vertical axis of the front face (with origin at the lower left corner), this diagram has the same knot type.\\

\noindent Let $x_0$ be a point in the Cantor set. By the fact that a point $(x,y)$ belongs to the Sierpi\'nski carpet if and only if $x$ and $y$ do not have a digit 1 in the same position in their non‑ending ternary representation, we have that the vertical segment $(x_0,y,0)$ with $0\leq y\leq 1$ lies entirely on the front face of the Menger sponge $M$. The analogous statement holds for horizontal segments $(x,y_0,1)$ if $y_0$ is an endpoint of the Cantor set. Finally, by Lemma \ref{dust-points}, the endpoints of a vertical segment on the front face can be joined with the corresponding endpoint of the horizontal segment on the back face, since the coordinates of such points are of the form $(x_0,y_0,0)$ and $(x_0,y_0,1)$ with $(x_0,y_0)$ in the Cantor dust (see Figure \ref{knotM}). Therefore, the result follows. $\square$

\begin{figure}[h] 
\begin{center}
\includegraphics[height=6cm]{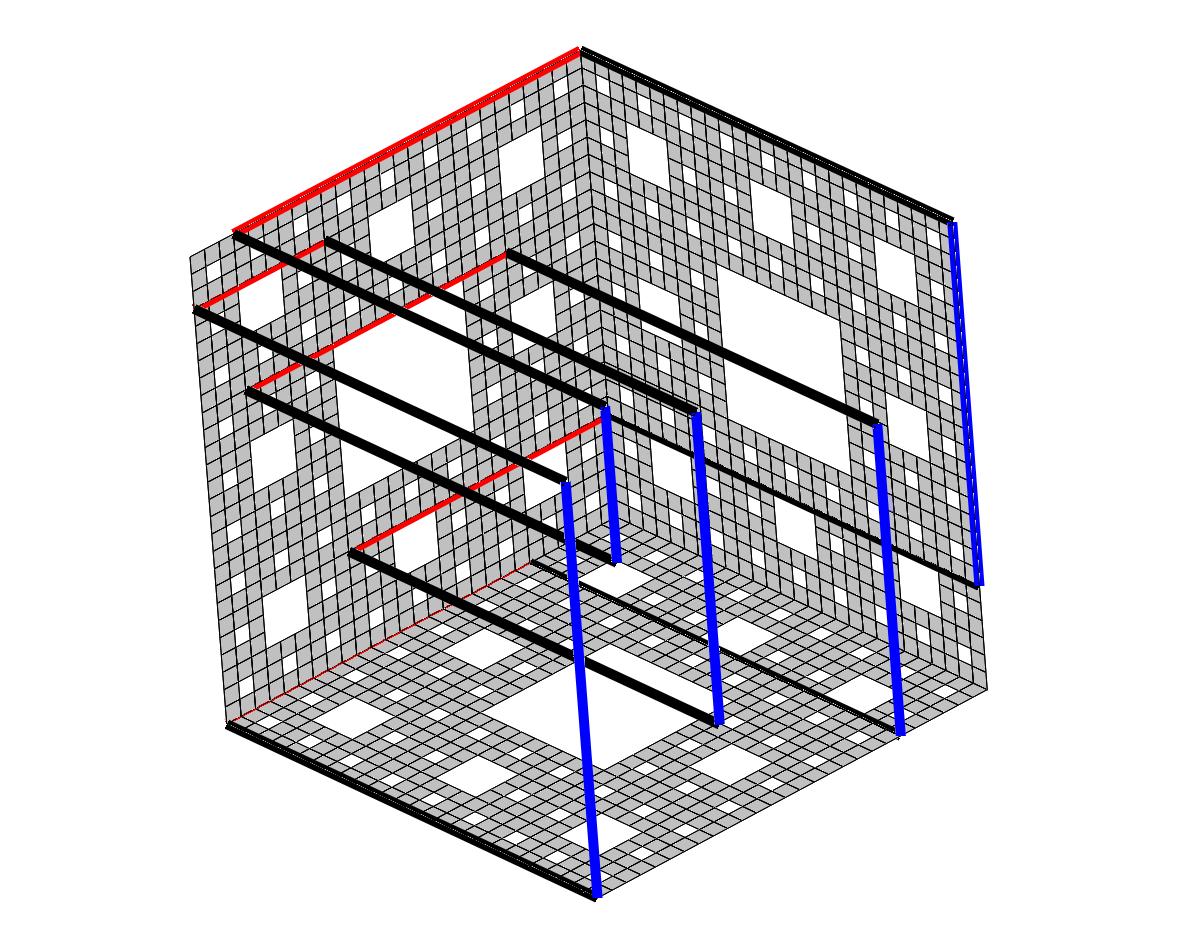}
\end{center}
\caption{\sl The figure-eight knot on the Menger sponge.} 
\label{knotM}
\end{figure}. 

\section{Wild knots embedded in the Menger sponge}

The goal of this section is to prove Theorem 1. We start by constructing squareflake curves.

\subsection{Squareflake curves}

Let $I=[0,1]$ be the unit interval. Consider the unit square $I^2\times\{0\}\subset\mathbb{R}^3$, whose boundary $S$ is a simple closed curve obtained by joining, in order, the vertices $(0,0,0)$, $(0,1,0)$, $(1,1,0)$, and $(1,0,0)$ with four linear segments (see Figure \ref{square}). This curve $S$ is homeomorphic to a circle. We will modify $S$ as follows. Let $e$ be the linear segment joining $(1,1,0)$ and $(1,0,0)$, so $e\subset S$.\\

\begin{figure}[h] 
\begin{center}
\includegraphics[width=5cm, height=4cm ]{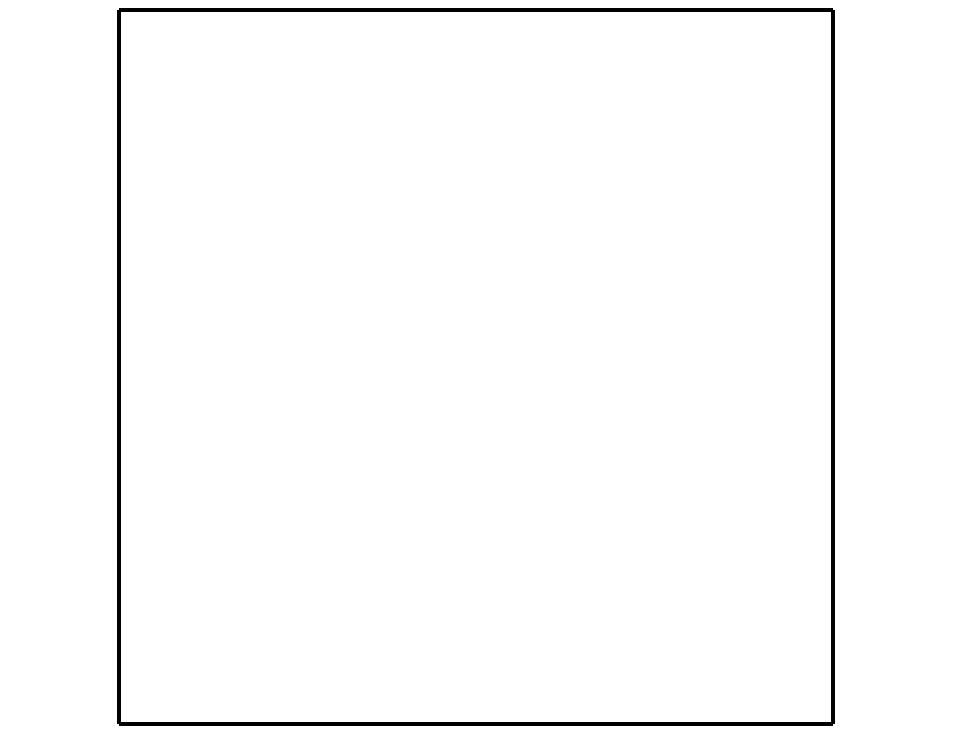}
\includegraphics[width=5cm, height=4cm ]{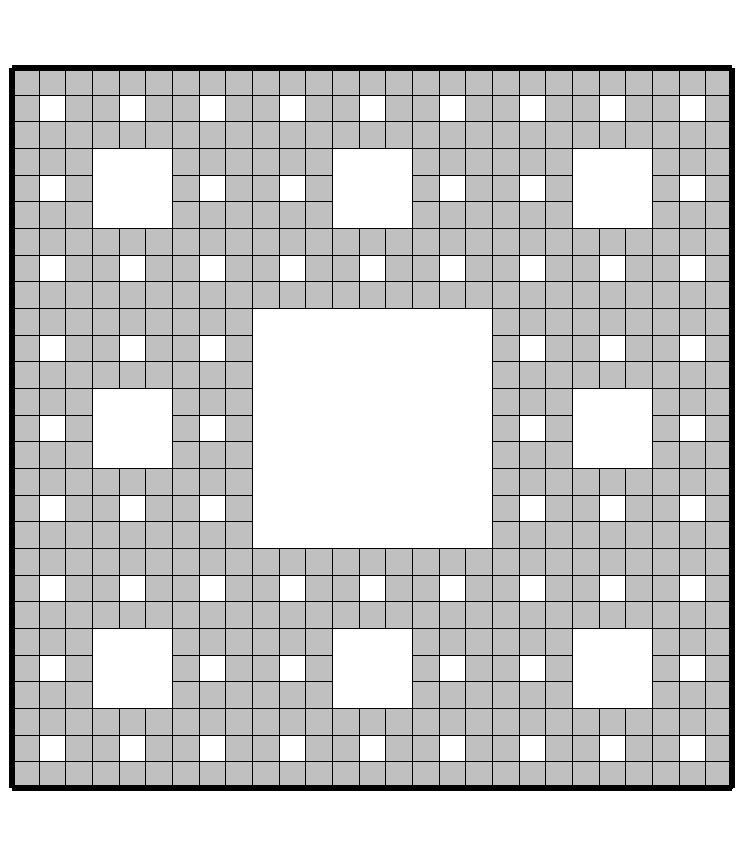}
\caption{\sl The curve $S$.}
\label{square}
\end{center}
\end{figure}

\noindent {\it First stage.} As in the construction of the usual Cantor set, consider the open middle‑third interval $e_1$ in $e$, running from $(1,\frac{1}{3},0)$ to $(1,\frac{2}{3},0)$ and contained in $S$. Let $F_1$ be the square having $e_1$ as one of its sides ($e_1 \subset F_1$), and consider its boundary $\partial F_1$. Replace $e_1$ in $S$ with $\partial F_1\setminus e_1$, keeping its endpoints fixed. Specifically, $e_1$ is replaced by three straight segments: from $(1,\frac{1}{3},0)$ to $(\frac{2}{3},\frac{1}{3},0)$, from $(\frac{2}{3},\frac{1}{3},0)$ to $(\frac{2}{3},\frac{2}{3},0)$, and from $(\frac{2}{3},\frac{2}{3},0)$ to $(1,\frac{2}{3},0)$ (see Figure \ref{F1}). This yields a new curve $S_1$ homeomorphic to $\mathbb{S}^1$. This transformation extends to an ambient isotopy $H_1: \mathbb{R}^3 \times I \rightarrow \mathbb{R}^3$ such that $S_1=H_{1_1}(S)$, where $H_{1_1}(x)=H_1(x,1)$. Moreover, we have $H_{1_1}(S\cap S_1)=S\cap S_1$. By Lemma \ref{dust-points}, $S_1$ is contained in the Menger sponge.\\

\begin{figure}[h] 
\begin{center}
\includegraphics[width=5cm, height=4cm]{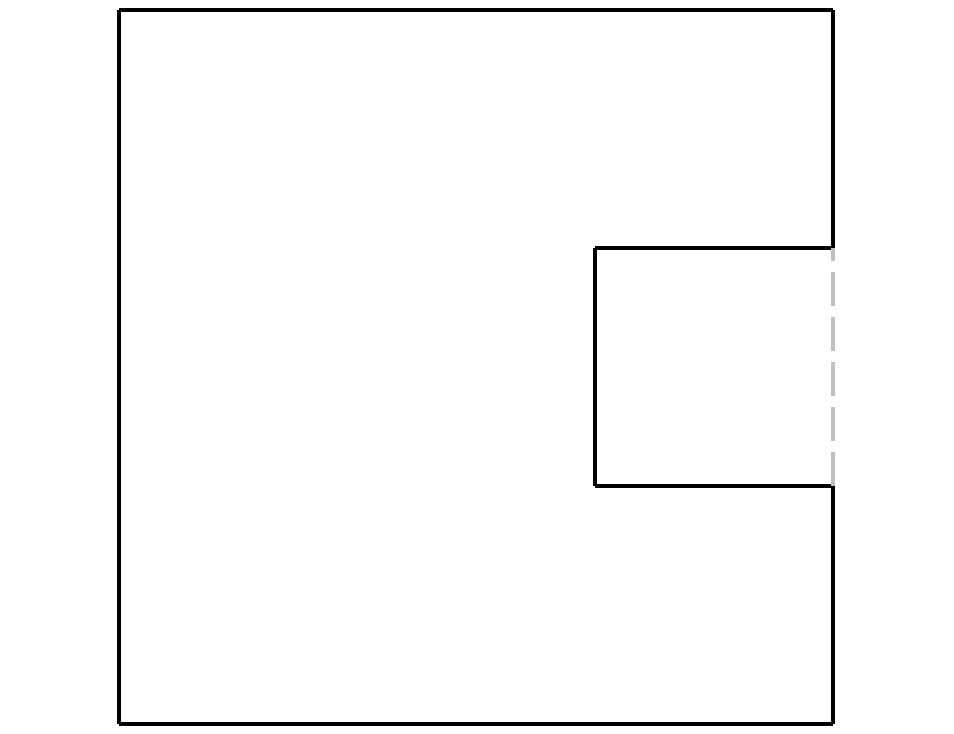}
\includegraphics[width=5cm, height=4cm]{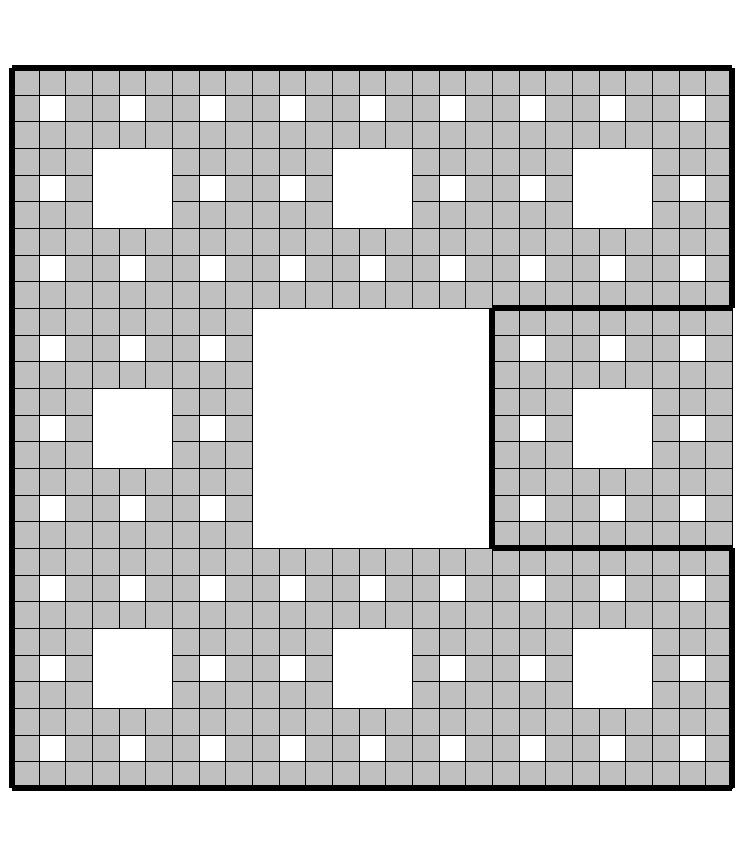}
\caption{\sl The curve $S_1$.}
\label{F1}
\end{center}
\end{figure}

\noindent {\it Second stage.} We now modify $S_1$. As in the first stage, consider the open middle‑third intervals $e_{2_s}$, $s=1,2$, of each of the two remaining segments of the edge $e$. Let $F_{2_s}$ be the square such that $e_{2_s}$ is one of its sides ($e_{2_s} \subset F_{2_s}$). Again, consider the boundary $\partial F_{2_s}$. We replace the segment $e_{2_s} \subset S_1$ with $\partial F_{2_s}  \setminus e_{2_s}$, keeping the endpoints of $e_{2_s}$ fixed. For $e_{2_1}$, these endpoints are $(1,\frac{1}{9},0)$ and $(1,\frac{2}{9},0)$; for $e_{2_2}$, they are $(1,\frac{7}{9},0)$ and $(1,\frac{8}{9},0)$. In other words, we replace $e_{2_1}$ with the union of three segments: from $(1,\frac{1}{9},0)$ to $(\frac{8}{9},\frac{1}{9},0)$, from $(\frac{8}{9},\frac{1}{9},0)$ to $(\frac{8}{9},\frac{2}{9},0)$, and from $(\frac{8}{9},\frac{2}{9},0)$ to $(1, \frac{2}{9},0)$. Similarly, we replace $e_{2_2}$ with three linear segments joining $(1,\frac{7}{9},0)$ to $(\frac{8}{9},\frac{7}{9},0)$, then to $(\frac{8}{9},\frac{8}{9},0)$, and finally to $(1, \frac{8}{9},0)$. Thus, we obtain a new curve $S_2$, again homeomorphic to $\mathbb{S}^1$. As before, there exists an ambient isotopy $H_2: \mathbb{R}^3\times I \rightarrow \mathbb{R}^3$ such that $S_2 = H_{2_1}(S_1)$, where $H_{2_1}(x) = H_2(x, 1)$. Again we have $H_{2_1}(S_1\cap S_2)=S_1\cap S_2$. This curve is also contained in the Menger sponge.\\

\begin{figure}[h] 
\begin{center}
\includegraphics[width=5cm, height=4cm]{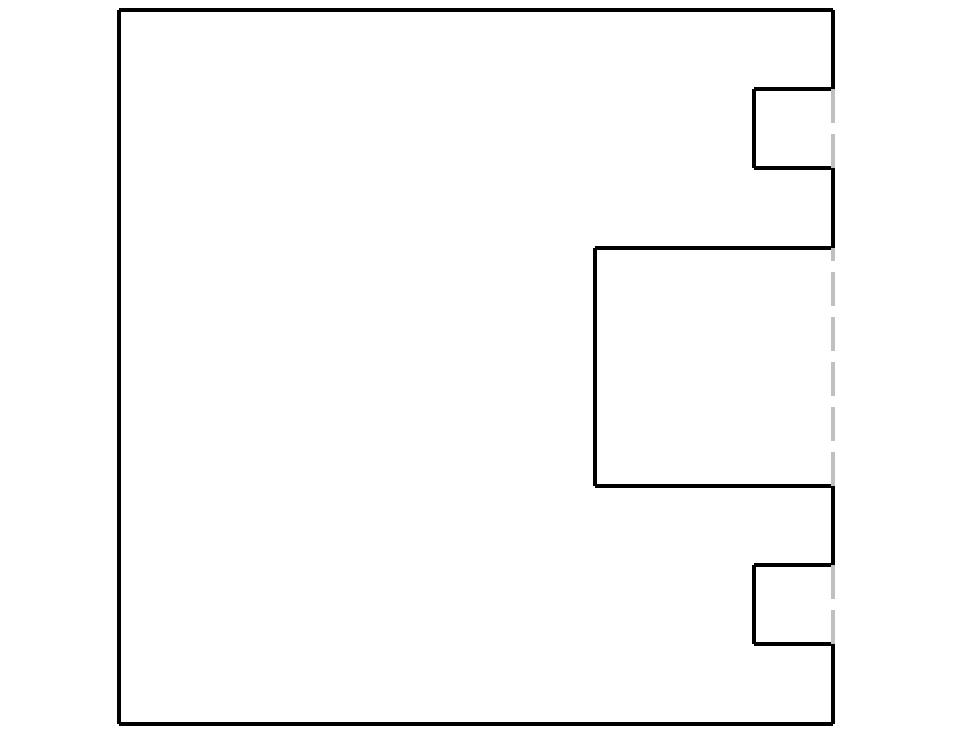}
\includegraphics[width=5cm, height=4cm]{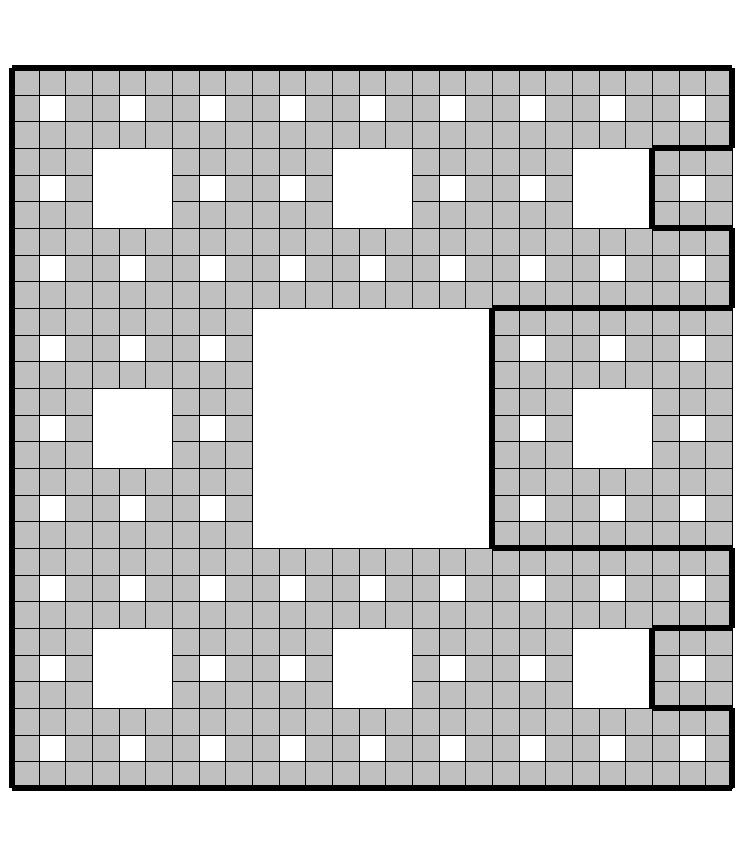}
\caption{\sl The curve $S_2$.}
\label{F2}
\end{center}
\end{figure}

\noindent {\it $m^{th}$ stage}. Let
\begin{equation*}
\begin{split}
D_{m-1}&=\Big\{\,k=\textstyle\sum_{j=1}^{m-1} d_j\,3^{\,j-1} \ :\ d_j\in\{0,2\}\ \text{for } j=1,\ldots,m-1\,\Big\}\\
&\subset\{0,1,\ldots,3^{m-1}-1\},
\end{split}
\end{equation*}
so that $|D_{m-1}|=2^{m-1}$; this is exactly the (finite) set of indices $k$ for which the segment $[\frac{3k+1}{3^m},\frac{3k+2}{3^m}]$ is a middle third that has \emph{not yet been touched} by stages $1,\ldots,m-1$ (for $m=1$ this is the single index $k=0$, recovering the first stage; for $m=2$ it is $\{0,2\}$, recovering the second stage). We repeat the same process on each open middle-third interval $e_{m_s}$, $s=1,\ldots,2^{m-1}$, indexed by $k\in D_{m-1}$, of each of the remaining segments of $e$ obtained at the $(m-1)$-th stage. More precisely, for each $k\in D_{m-1}$ we replace the segment joining $(1,\frac{3k+1}{3^m},0)$ to $(1,\frac{3k+2}{3^m},0)$ by the union of three segments joining
$(1,\frac{3k+1}{3^m},0)$ to $(\frac{3^m-1}{3^m},\frac{3k+1}{3^m},0)$, $(\frac{3^m-1}{3^m},\frac{3k+1}{3^m},0)$ to $(\frac{3^m-1}{3^m},\frac{3k+2}{3^m},0)$, and $(\frac{3^m-1}{3^m},\frac{3k+2}{3^m},0)$ to
$(1, \frac{3k+2}{3^m},0)$ (see Figure \ref{F3}). The new curve $S_m$ obtained from $S_{m-1}$ is homeomorphic to $\mathbb{S}^1$. Again, there exists an ambient isotopy $H_m: \mathbb{R}^3\times I \rightarrow \mathbb{R}^3$ such that $S_m = H_{m_1}(S_{m-1})$, where $H_{m_1}(x) = H_m(x, 1)$. Moreover, $H_{m_1}(S_{m-1}\cap S_m)=S_{m-1}\cap S_m$. This curve is also contained in the Menger sponge. Note that once a middle third has been replaced by a notch at stage $m$, it is never modified again at a later stage: stage $m+1$ only acts on the $2^m$ middle thirds of the segments of $e$ that are \emph{still untouched} after stage $m$. This bookkeeping detail is the key to Lemma \ref{curve} below.\\

\begin{figure}[h] 
\begin{center}
\includegraphics[width=5cm, height=4cm]{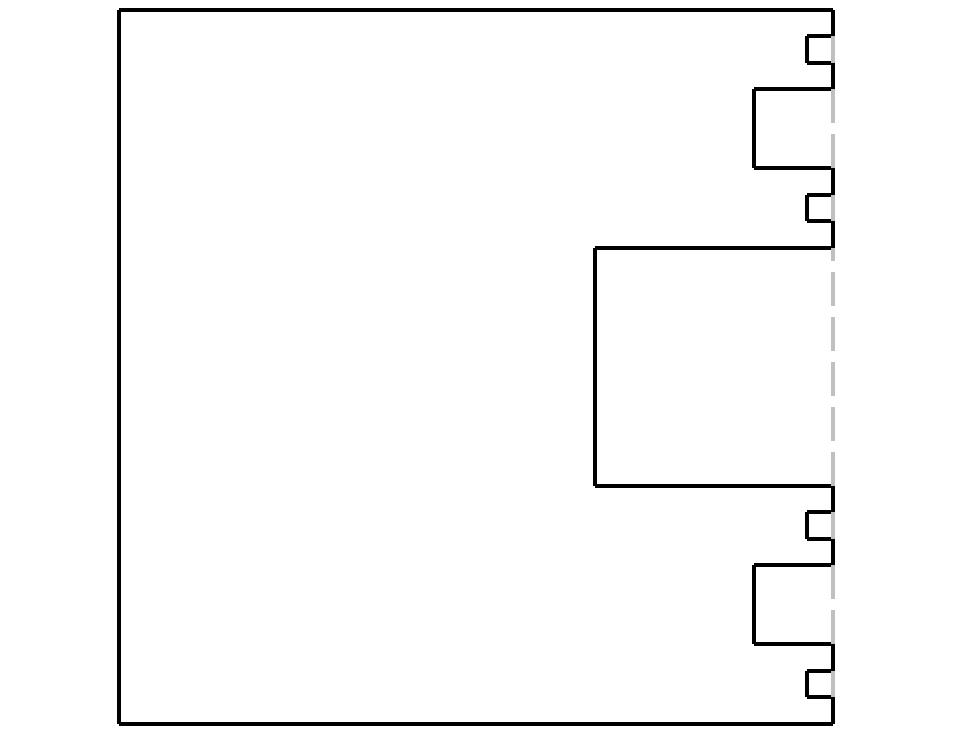}
\includegraphics[width=5cm, height=4cm]{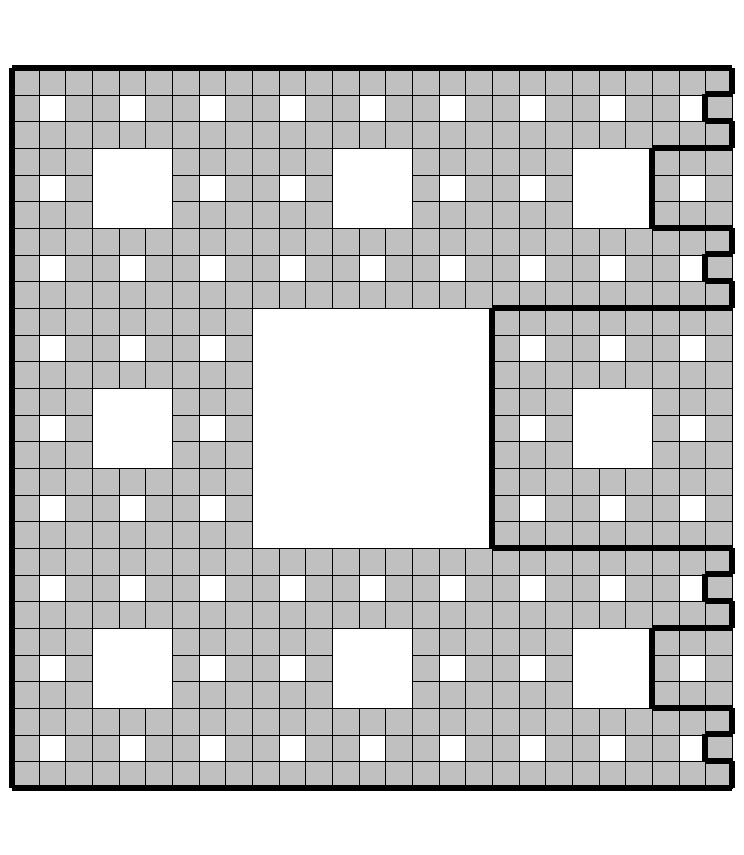}
\caption{\sl The curve $S_3$.}
\label{F3}
\end{center}
\end{figure}

\noindent We now define the limiting object $\mathcal S=\varinjlim_k S_k$ directly as a subset of $\mathbb{R}^3$, rather than only as an abstract direct limit; this is what is needed to make Lemma \ref{curve} below precise.

\medskip
\noindent{\bf A locally finite replacement lemma.} We isolate the elementary point-set principle underlying the construction, since we will use it three times in this paper (for $\mathcal S$, for the wild knots $\mathcal K$ of Section 3.2, and for the isotopy of Theorem 2 in Section 3.3).

\begin{lem}\label{embed}
Let $J=[0,1]$ and let $C\subset J$ be the standard middle-thirds Cantor set, obtained by removing the countable family of pairwise disjoint open intervals $\{(a_i,b_i)\}_{i\in\mathbb{N}}$ ($J\setminus C=\bigsqcup_i (a_i,b_i)$). Let $\phi_0:J\rightarrow \mathbb{R}^3$ be a topological embedding. Suppose that for each $i$ we are given a topological embedding $\gamma_i:[a_i,b_i]\rightarrow\mathbb{R}^3$ such that:
\begin{enumerate}
\item[(a)] $\gamma_i(a_i)=\phi_0(a_i)$ and $\gamma_i(b_i)=\phi_0(b_i)$;
\item[(b)] $\mathrm{diam}\,\gamma_i([a_i,b_i])\leq \kappa\,(b_i-a_i)$ for some constant $\kappa\geq1$ independent of $i$;
\item[(c)] the sets $\gamma_i((a_i,b_i))$, $i\in\mathbb{N}$, are pairwise disjoint and disjoint from $\phi_0(J)$.
\end{enumerate}
Define $\phi:J\rightarrow\mathbb{R}^3$ by $\phi=\phi_0$ on $C$ and $\phi=\gamma_i$ on $[a_i,b_i]$ for each $i$ (this is consistent by (a)). Then $\phi$ is a topological embedding, with image
$$
\phi(J)=\phi_0(C)\,\cup\,\bigcup_{i\in\mathbb{N}}\gamma_i([a_i,b_i]).
$$
\end{lem}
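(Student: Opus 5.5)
The plan is the standard argument: show $\phi$ is a continuous injection on the compact space $J$, so it is an embedding because a continuous injection from a compact space into the Hausdorff space $\mathbb{R}^3$ is a homeomorphism onto its image. Well-definedness is immediate from (a), since the closed intervals $[a_i,b_i]$ meet $C$ only in their endpoints and are pairwise disjoint. The image formula also follows at once, since $J=C\cup\bigcup_i[a_i,b_i]$.

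\emph{Injectivity.} I will split into cases according to where two distinct points $s\neq t$ of $J$ lie.
\begin{itemize}
\item If $s,t\in C$, then $\phi(s)\neq\phi(t)$ because $\phi_0$ is injective.
\item If $s,t$ lie in the same closed interval $[a_i,b_i]$, injectivity of $\gamma_i$ applies.
\item If $s\in(a_i,b_i)$ and $t\in C$, then $\phi(t)\in\phi_0(J)$ while $\phi(s)\in\gamma_i((a_i,b_i))$, and these sets are disjoint by (c). This case also covers $t$ being an endpoint of another interval, since endpoints lie in $C$.
\item If $s\in(a_i,b_i)$ and $t\in(a_j,b_j)$ with $i\neq j$, the open images are disjoint by (c).
\end{itemize}

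\emph{Continuity.} On each open interval $(a_i,b_i)$, $\phi=\gamma_i$ is continuous. The real work is continuity at a point $t\in C$, which I expect to be the only genuine obstacle. Take $t_n\to t$; it suffices to treat separately the subsequence lying in $C$ and, for each fixed $i$, the subsequence lying in $(a_i,b_i)$.
\begin{itemize}
\item For terms in $C$, use continuity of $\phi_0$.
\item If infinitely many terms lie in a single $(a_i,b_i)$, then $t$ must be $a_i$ or $b_i$, and continuity of $\gamma_i$ together with (a) gives $\phi(t_n)\to\phi(t)$.
\end{itemize}
The remaining case is where $t_n\in(a_{i_n},b_{i_n})$ with $i_n\to\infty$. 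Only finitely many complementary intervals have length $\ge\varepsilon$, so $b_{i_n}-a_{i_n}\to0$. Hence $|a_{i_n}-t|\le|t_n-t|+(b_{i_n}-a_{i_n})\to0$. Using (a) and (b), we get
$$|\phi(t_n)-\phi(t)|\le|\gamma_{i_n}(t_n)-\gamma_{i_n}(a_{i_n})|+|\phi_0(a_{i_n})-\phi_0(t)|\le\kappa(b_{i_n}-a_{i_n})+|\phi_0(a_{i_n})-\phi_0(t)|,$$
and both terms tend to $0$, the second by continuity of $\phi_0$. This is exactly where hypothesis (b) is needed: without a uniform diameter control, the replacement arcs could fail to shrink and $\phi$ would be discontinuous at Cantor points.

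Combining the three parts, $\phi$ is a continuous injection of the compact space $J$ into $\mathbb{R}^3$, hence a topological embedding with the stated image.
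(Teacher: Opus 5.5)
Your proposal is correct and follows the paper's proof: continuity at Cantor points comes from (a), (b) and the fact that the complementary intervals form a null sequence, injectivity is checked case by case using (c), and the compact-to-Hausdorff argument upgrades the continuous injection to an embedding. Your separate case where infinitely many $t_n$ lie in one fixed $(a_i,b_i)$ (so $t\in\{a_i,b_i\}$ and continuity of $\gamma_i$ applies) is genuinely needed. The paper's $\varepsilon$-$\delta$ version asserts that every complementary interval meeting a small neighborhood of $x_0$ is short, which fails when $x_0$ is an endpoint of a long interval such as $(1/3,2/3)$, so your sequential argument is the more careful of the two.
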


\noindent{\it Proof.} \emph{Well-defined and continuous.} $\phi$ is well defined by (a). To see $\phi$ is continuous, it suffices to check continuity at points $x_0\in C$ (continuity on the open set $J\setminus C=\bigsqcup(a_i,b_i)$ is clear, since $\phi$ restricted to each closed interval $[a_i,b_i]$ is the continuous map $\gamma_i$). Let $x_0\in C$ and $\varepsilon>0$. Since $C$ is nowhere dense and $\{(a_i,b_i)\}$ is a null sequence of intervals (standard fact for the middle-thirds Cantor set: for every $\delta>0$ only finitely many $i$ have $b_i-a_i\geq\delta$, because $\sum_i(b_i-a_i)=1<\infty$), we may choose $\delta>0$ so small that every interval $(a_i,b_i)$ meeting $(x_0-\delta,x_0+\delta)$ has length $b_i-a_i<\varepsilon/(2\kappa+2)$, and so small that $\phi_0$ maps $(x_0-\delta,x_0+\delta)$ into the $\varepsilon/2$-ball about $\phi_0(x_0)$ (possible since $\phi_0$ is continuous). Now let $x\in J$ with $|x-x_0|<\delta$. If $x\in C$, then $|\phi(x)-\phi(x_0)|=|\phi_0(x)-\phi_0(x_0)|<\varepsilon/2<\varepsilon$. If $x\in(a_i,b_i)$ for some $i$, then $(a_i,b_i)\subset(x_0-2\delta,x_0+2\delta)$ once $\delta$ is small enough (since $x$ is within $\delta$ of $x_0$ and $b_i-a_i<\delta$ for $\delta$ small), so $a_i\in(x_0-2\delta,x_0+2\delta)$, and by (a) and (b),
\begin{equation*}
\begin{split}
|\phi(x)-\phi(x_0)|&\leq \mathrm{diam}\,\gamma_i([a_i,b_i])+|\phi_0(a_i)-\phi_0(x_0)|\\
&\leq \kappa(b_i-a_i)+|\phi_0(a_i)-\phi_0(x_0)|,
\end{split}
\end{equation*}
which is $<\varepsilon$ once $\delta$ is chosen small enough that both terms are controlled as above. Hence $\phi$ is continuous at $x_0$.

\smallskip
\noindent \emph{Injective.} If $x,x'\in C$, $\phi(x)=\phi(x')$ forces $x=x'$ since $\phi_0$ is injective. If $x\in(a_i,b_i)$, $x'\in(a_j,b_j)$ with $i\neq j$, then $\phi(x)\in\gamma_i((a_i,b_i))$ and $\phi(x')\in\gamma_j((a_j,b_j))$ are distinct by (c). If $x\in(a_i,b_i)$ and $x'\in C$, then $\phi(x)\in\gamma_i((a_i,b_i))$ while $\phi(x')\in\phi_0(J)$, again distinct by (c) (using $\phi_0(x')\in\phi_0(J)$). If $x,x'\in[a_i,b_i]$ for the same $i$, injectivity follows from injectivity of $\gamma_i$.

\smallskip
\noindent \emph{Homeomorphism onto its image.} $\phi$ is a continuous injection from the compact space $J$ into the Hausdorff space $\mathbb{R}^3$, hence a closed map onto its image, and thus a homeomorphism onto its image. $\square$

\medskip
\noindent We now apply Lemma \ref{embed} to the squareflake construction. Identify $e$ with $J=[0,1]$ via $y\mapsto(1,y,0)$, and let $\phi_0:J\to e\subset\mathbb R^3$, $\phi_0(y)=(1,y,0)$, be this identification; let $C\subset[0,1]$ be the standard Cantor set, so that $J\setminus C=\bigsqcup_{m\geq1}\bigsqcup_{k\in D_{m-1}}\big(\tfrac{3k+1}{3^m},\tfrac{3k+2}{3^m}\big)$, where $D_{m-1}$ is the index set defined above. For the interval $(a,b)=\big(\tfrac{3k+1}{3^m},\tfrac{3k+2}{3^m}\big)$ ($k\in D_{m-1}$), let $\gamma_{m,k}:[a,b]\to\mathbb{R}^3$ be the affine, constant-speed parametrization of the three-segment notch built at the $m$-th stage of the construction above (i.e., the first, second and third thirds of $[a,b]$ are mapped affinely onto the three straight segments described in the $m$-th stage). Since the notch is contained in the box $[\frac{3^m-1}{3^m},1]\times[\frac{3k+1}{3^m},\frac{3k+2}{3^m}]\times\{0\}$, whose diameter is at most $\sqrt2\cdot\frac{1}{3^m}=\sqrt2\,(b-a)$, hypothesis (b) of Lemma \ref{embed} holds with $\kappa=\sqrt2$; hypotheses (a) and (c) hold by construction, since distinct stage-$m$ notches have disjoint $y$-ranges $[\frac{3k+1}{3^m},\frac{3k+2}{3^m}]$ (these are exactly the removed middle thirds, which are pairwise disjoint by the standard Cantor construction) and every notch is confined to $x\geq\frac{3^m-1}{3^m}>\frac23$, hence disjoint from the interior of $e$ near $\phi_0(J)$ except at its two prescribed endpoints. Lemma \ref{embed} therefore produces an embedding $\phi_e:J\to\mathbb{R}^3$ with image $\phi_0(C)\cup\bigcup_{m,k}\gamma_{m,k}([a,b])$, i.e., the Cantor set of un-notched points of $e$ together with all the notches built at every stage.

\noindent Finally, extend $\phi_e$ to an embedding $\phi:\mathbb{S}^1\to\mathbb{R}^3$ of the full boundary circle by keeping the other three sides of the square fixed (they are never modified), matching endpoints at the vertices $(0,0,0),(0,1,0),(1,1,0)$. Define
$$
\mathcal{S}:=\phi(\mathbb{S}^1)= (S\setminus e)\,\cup\, \phi_0(C)\,\cup\,\bigcup_{m\geq1}\bigcup_{k\in D_{m-1}}\gamma_{m,k}\Big(\big[\tfrac{3k+1}{3^m},\tfrac{3k+2}{3^m}\big]\Big).
$$
This is the honest, point-set definition of the squareflake curve as a subset of $\mathbb{R}^3$; the informal description ``$\mathcal S=\varinjlim_k S_k$'' used below refers to this object, justified by the following lemma.

\begin{lem}\label{curve}
The set ${\mathcal{S}}$ defined above is the image of a topological embedding $\phi:\mathbb{S}^1\to\mathbb{R}^3$; in particular ${\mathcal{S}}$ is homeomorphic to $\mathbb{S}^1$. Moreover, 
$S_m\to\mathcal S$ in the Hausdorff metric as $m\to\infty$, and ${\mathcal{S}}$ is contained in the Menger sponge.
\end{lem}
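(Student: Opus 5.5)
The proof has three parts: the embedding claim, Hausdorff convergence, and containment in $M$.

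\emph{Embedding.} Most of this was done in the discussion preceding the statement. Lemma \ref{embed} applied to $\phi_0(y)=(1,y,0)$ and the notches $\gamma_{m,k}$ gives an embedding $\phi_e:J\to\mathbb{R}^3$. Its image lies in $\{(x,y,0): x\ge \tfrac{2}{3},\ y\in[0,1]\}$, and it meets $S\setminus e$ only at the vertices $(1,0,0)$ and $(1,1,0)$, since every notch has $y$-range strictly inside $(0,1)$. Gluing $\phi_e$ to the fixed parametrization of the three untouched sides therefore gives a continuous injection $\mathbb{S}^1\to\mathbb{R}^3$. Because $\mathbb{S}^1$ is compact and $\mathbb{R}^3$ is Hausdorff, it is an embedding, so $\mathcal{S}\cong\mathbb{S}^1$.

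\emph{Hausdorff convergence.} By construction, $S_m$ is $(S\setminus e)$ together with all notches of stages $\le m$ and the untouched closed segments of $e$ at level $m$. These segments are exactly the $2^m$ intervals $I_{m,j}$ of length $3^{-m}$ whose union is the $m$-th Cantor approximation $C_m$. The set $\mathcal{S}$ contains the same part $(S\setminus e)$ and the same notches of stages $\le m$. Inside each box $\{1\}\times I_{m,j}$ enlarged to $[\tfrac{3^{m+1}-1}{3^{m+1}},1]\times I_{m,j}\times\{0\}$, $S_m$ has the segment $\{1\}\times I_{m,j}\times\{0\}$, while $\mathcal{S}$ has $\phi_0(C\cap I_{m,j})$ together with the later notches. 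Each point of one set lies within $\sqrt2\cdot 3^{-m}$ of the other: a point of a later notch is within $3^{-m-1}$ horizontally of the segment, and a point of the segment is within $3^{-m}$ of a Cantor point in the same $I_{m,j}$. Hence $d_H(S_m,\mathcal{S})\le\sqrt2\,3^{-m}\to0$.

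\emph{Containment in $M$.} Each $S_m$ lies in $M$ by the stagewise claims, which rest on Lemma \ref{dust-points}. Since $M=\bigcap_k M_k$ is compact, it is closed in the Hausdorff topology on compact sets, so the Hausdorff limit of sets contained in $M$ is contained in $M$. Thus $\mathcal{S}\subset M$.

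The main obstacle is justifying ``$S_m\subset M$'' honestly rather than asserting it. The notch segments sit on the face $z=0$ at $x=\frac{3^m-1}{3^m}$ or at $y=\frac{3k+1}{3^m},\frac{3k+2}{3^m}$. The points to check are those with coordinates $(x,y,0)$; such a point lies in the face carpet exactly when $x$ and $y$ have no ternary digit $1$ in a common position (the criterion quoted in the proof of Theorem A), choosing expansions appropriately. For example, $x=\frac{3^m-1}{3^m}=0.22\ldots2$ (with $m$ twos) has digit $2$ in positions $1,\dots,m$, and the expansion can end in $0$s. So, first, on the vertical segment $x=\frac{3^m-1}{3^m}$, a conflict could arise only in positions $>m$, where $x$ has digits $0$. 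Second, the horizontal segments have $y=\frac{3k+1}{3^m}$ or $\frac{3k+2}{3^m}$ with $k\in D_{m-1}$. These values have finite expansions using digits $0,2$ in the first $m-1$ places and $1$ or $2$ in the $m$-th, and they can be rewritten without any $1$ (for instance $\ldots1=\ldots0222\ldots$). Hence they never conflict with $x$. I would carry out this digit check carefully, since it is the only non-formal step.
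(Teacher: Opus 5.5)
Your proof is correct, and it differs from the paper's mainly in the containment step. The embedding part is the paper's argument: Lemma~\ref{embed} gives $\phi_e$, and compact-to-Hausdorff handles the gluing along the three fixed sides. For the Hausdorff part, both arguments observe that $S_m$ and $\mathcal S$ differ only inside boxes of size $O(3^{-m})$. Your bookkeeping by the $2^m$ level-$m$ Cantor intervals gives $\sqrt2\,3^{-m}$ rather than the paper's $\sqrt2\,3^{-(m+1)}$, which is still enough.

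The genuine difference is how you get $\mathcal S\subset M$. The paper checks the three kinds of points of $\mathcal S$ directly:
\begin{itemize}
\item $S\setminus e$, which is trivial;
\item $\phi_0(C)$, via Lemma~\ref{dust-points};
\item the notches, via the stagewise claims.
\end{itemize}
You instead prove $S_m\subset M$ for every $m$ and pass to the Hausdorff limit, using that $M$ is closed. Your route spares you a separate treatment of the Cantor points, at the cost of making containment depend on the convergence statement. The paper's route does not need convergence for containment.

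Your ternary-digit check of the notches is more explicit than the paper's appeal to Lemma~\ref{dust-points}. For notch points $(x,y,0)$ on the face $z=0$, that lemma applies only after permuting coordinates. For the vertical side, use $(x,z)=(\frac{3^m-1}{3^m},0)\in C\times C$. For the horizontal sides, use $(y,z)\in C\times C$, since their $y$-values lie in $C$.

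Two small corrections. First, the vertical side needs no ``positions $>m$'' discussion at all: $x=0.2\cdots2000\ldots$ contains no digit $1$, so no conflict can occur. Second, for $S_m\subset M$ you should also record that the three fixed sides and the untouched pieces of $e$ lie in $M$. This is immediate, because one coordinate is $0$ or $1=0.\overline{2}_3$.
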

\noindent{\it Proof.} The first statement is Lemma \ref{embed}, applied as above. For the Hausdorff convergence: $S_m$ and $\mathcal S$ agree outside the union of the (at most countably many, but only finitely many of diameter $\geq\varepsilon$) notch-boxes of stage $>m$, all of which have diameter at most $\sqrt2\cdot 3^{-(m+1)}$; hence the Hausdorff distance between $S_m$ and $\mathcal S$ is at most $\sqrt2\cdot3^{-(m+1)}\to0$. Finally, each point of $\mathcal S$ lies either in $S\setminus e$ (contained in $M$ trivially, as it lies on the coordinate axes with Cantor-compatible ternary expansions, as in the proof of Theorem A), in $\phi_0(C)$ (contained in $M$ by Lemma \ref{dust-points}, since $x=1=0.\overline{2}_3$ and $y\in C$ both have ternary expansions avoiding the digit $1$), or in some notch $\gamma_{m,k}([a,b])$, which was already shown, stage by stage, to lie in $M$ in the construction above. Hence $\mathcal S\subset M$. $\square$

\begin{definition}
Any curve ${\mathcal{S}}$ constructed as above, using either the usual Cantor set or a generalization of it, is called a \emph{squareflake curve}.
\end{definition}

\subsection{Main theorem}

Let us prove Theorem 1. We begin by constructing wild knots in the Menger sponge.\\

\noindent Let $I=[0,1]$ be the unit interval, and consider the usual Menger sponge $M$ contained in $I^3$. Notice that there is a usual Cantor set contained in the edge $e=\{(1,t,0)\,\,:\,\,t\in [0,1]\}\subset M$. We will construct our wild knots recursively, compatible with the construction of the Menger sponge. Consider the face $F=I^2\times\{0\}$ such that the squareflake curve $\mathcal{S}$ lies entirely on $F$.\\

\noindent {\bf Construction:} Let $K$ be a tame knot. By Theorem A, there exists a stage $n(K)$ of the Menger sponge construction such that $K$ is contained in $M_{n(K)}$.\\

\noindent {\it First stage.} Consider the squareflake curve $\mathcal{S}$ and the second stage of the Menger sponge, $M_2$. There are three cubes $Q_{1_1}$, $Q_{1_2}$ and $Q_{1_3}$ in $M_2$ containing, as sides, the middle‑third segments $T_{1_1}=[\frac{7}{9},\frac{8}{9}]\times\{\frac{1}{3}\}\times\{0\}$, $T_{1_2}=\{\frac{2}{3}\}\times [\frac{4}{9},\frac{5}{9}]\times\{0\}$ and $T_{1_3}=[\frac{7}{9},\frac{8}{9}]\times\{\frac{2}{3}\}\times\{0\}$, respectively (see Figure \ref{W1}). Each $T_{1_i}$ lies entirely on $\mathcal{S}$. Each cube $Q_{1_i}$ is homothetically equivalent to the Menger sponge $M$; hence, by the proof of Theorem A, there exists an isotopic copy of our knot $K$, say $K_{1_i}$, contained in $M\cap Q_{1_i}$, $i=1,2,3$. Furthermore, we can construct $K_{1_i}$ so that $K_{1_i}\cap K_{1_j}=\emptyset$ for $i\neq j$ and an unknotted segment $e_{1_i}$ of $K_{1_i}$ is contained in the corresponding middle‑third segment $T_{1_i}$. This allows us to define $K_{1}$ as the connected sum of $\mathcal{S}$ with $K_{1_1}$, $K_{1_2}$ and $K_{1_3}$ along the corresponding segments $e_{1_i}$; that is,
$$
K_1\cong {\mathcal{S}}\#K_{1_1}\# K_{1_2}\# K_{1_3}.   
$$
Observe that $K_1$ is a knot lying entirely in the Menger sponge, since ${\mathcal{S}}$ and each $K_{1_i}$ do. \\

\begin{figure}[h] 
\begin{center}
\includegraphics[width=5cm, height=4cm]{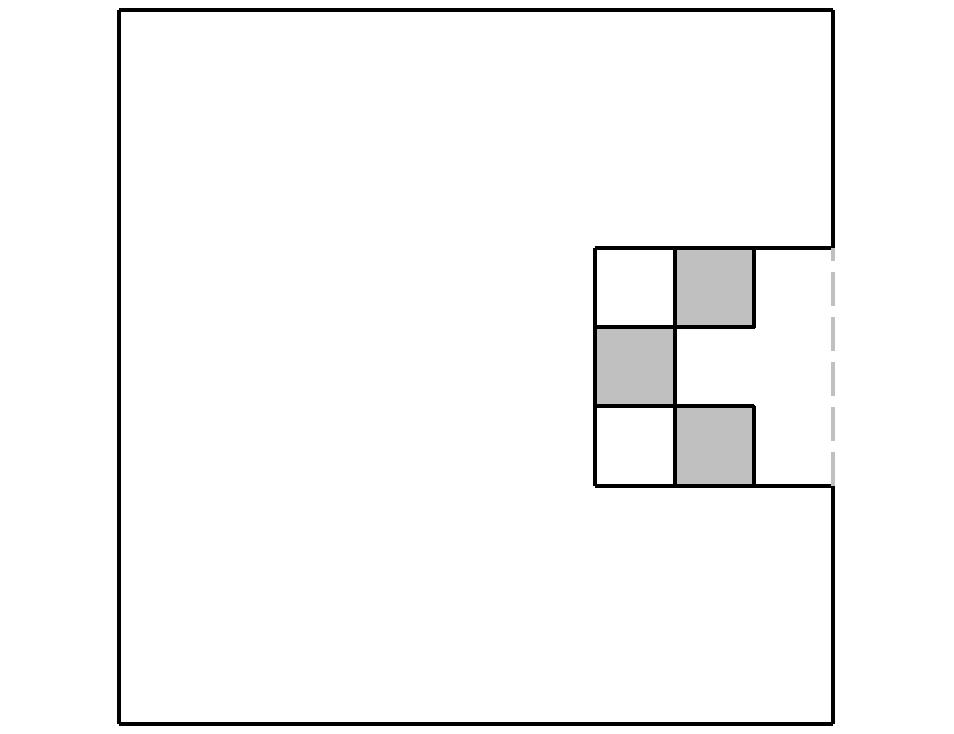}
\includegraphics[width=5cm, height=4cm]{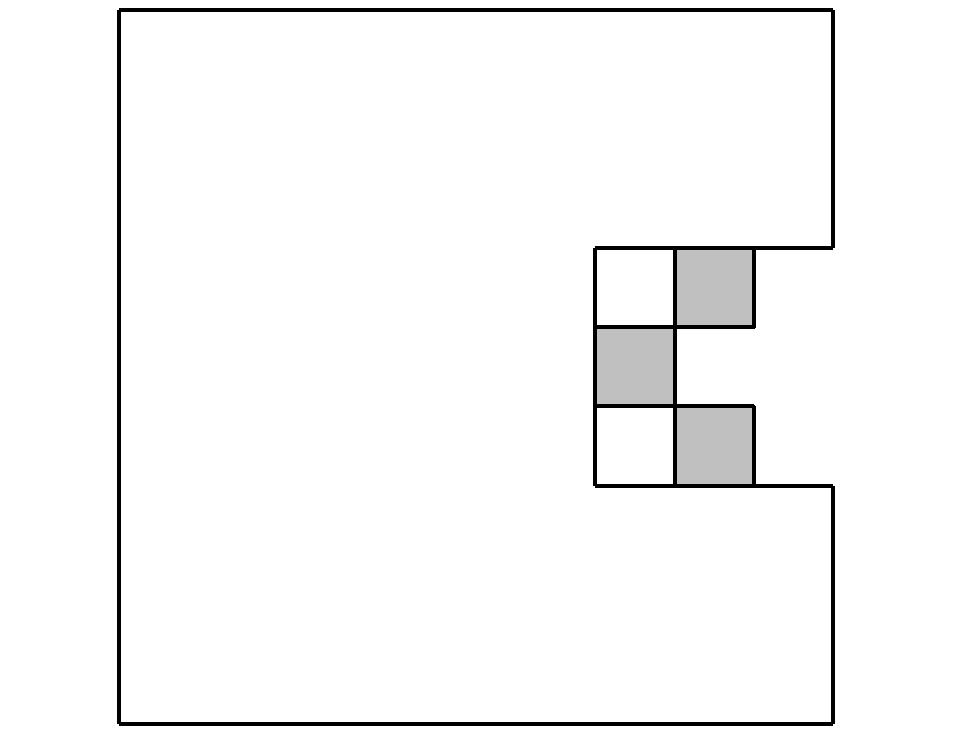}
\caption{\sl The curve $S_1$ and the projection of the cubes $Q_{1_1}$, $Q_{1_2}$ and $Q_{1_3}$ contained in the second stage of the Menger sponge, $M_2$.}
\label{W1}
\end{center}
\end{figure}

\noindent {\it Second stage.} Consider our knot $K_1$ and the third stage of the Menger sponge, $M_3$. There are six cubes $Q_{2_i}$ ($i=1,2,\ldots,6$) in $M_3$ containing, as sides, the middle‑third segments $T_{2_1}=[\frac{25}{3^3},\frac{26}{3^3}]\times\{\frac{1}{9}\}\times\{0\}$, $T_{2_2}=\{\frac{8}{3^2}\}\times [\frac{4}{3^3},\frac{5}{3^3}]\times\{0\}$, $T_{2_3}=[\frac{25}{3^3},\frac{26}{3^3}]\times\{\frac{2}{9}\}\times\{0\}$, $T_{2_4}=[\frac{25}{3^3},\frac{26}{3^3}]\times\{\frac{7}{9}\}\times\{0\}$, $T_{2_5}=\{\frac{8}{3^2}\}\times [\frac{22}{3^3},\frac{23}{3^3}]\times\{0\}$, and $T_{2_6}=[\frac{25}{3^3},\frac{26}{3^3}]\times\{\frac{8}{9}\}\times\{0\}$, respectively (see Figure \ref{W2}). Again, each $T_{2_i}$ lies entirely on $\mathcal{S}$. Each cube $Q_{2_i}$ is homothetically equivalent to $M$; by the proof of Theorem A, there exists an isotopic copy of our knot $K$, say $K_{2_i}$, contained in $M\cap Q_{2_i}$, $i=1,2,\ldots,6$. We can arrange that $K_{2_i}\cap K_{2_j}=\emptyset$ for $i\neq j$ and that an unknotted segment $e_{2_i}$ of $K_{2_i}$ lies in the corresponding middle‑third segment $T_{2_i}$. We define $K_{2}$ as the connected sum of $K_1$ with $K_{2_1}$, $K_{2_2},\,\ldots,\,K_{2_6}$ along the corresponding segments $e_{2_i}$; that is,
$$
K_2\cong K_1\#K_{2_1}\# K_{2_2}\# K_{2_3}\#K_{2_4}\# K_{2_5}\# K_{2_6}.   
$$
Again, $K_2$ lies entirely in the Menger sponge, since $K_1$ and each $K_{2_i}$ do.\\

\begin{figure}[h] 
\begin{center}
\includegraphics[width=5cm, height=4cm]{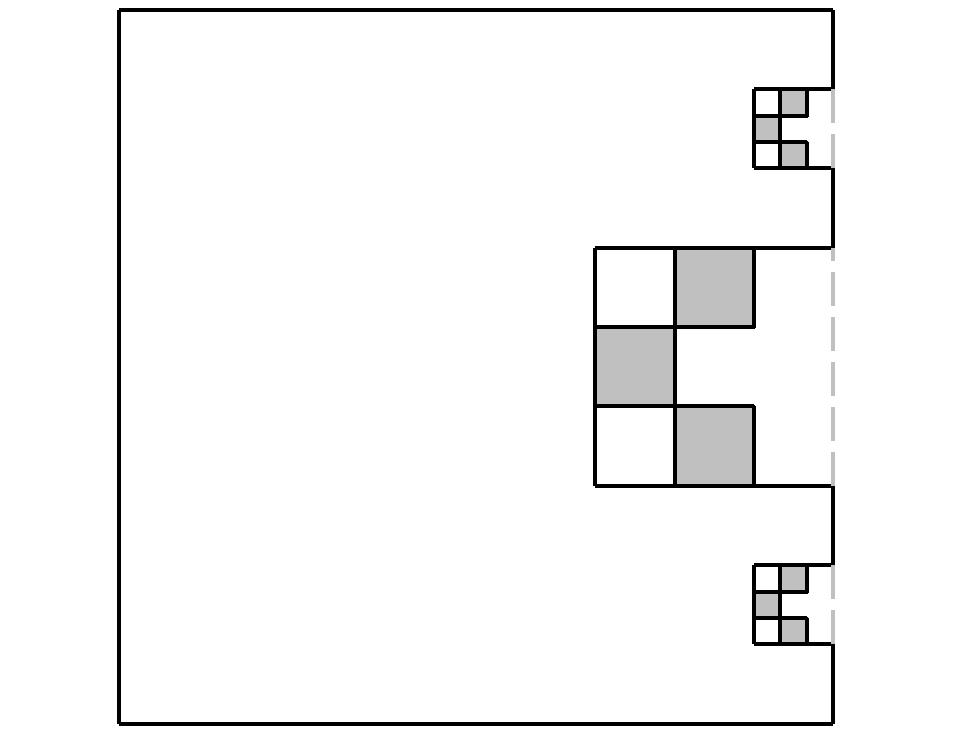}
\includegraphics[width=5cm, height=4cm]{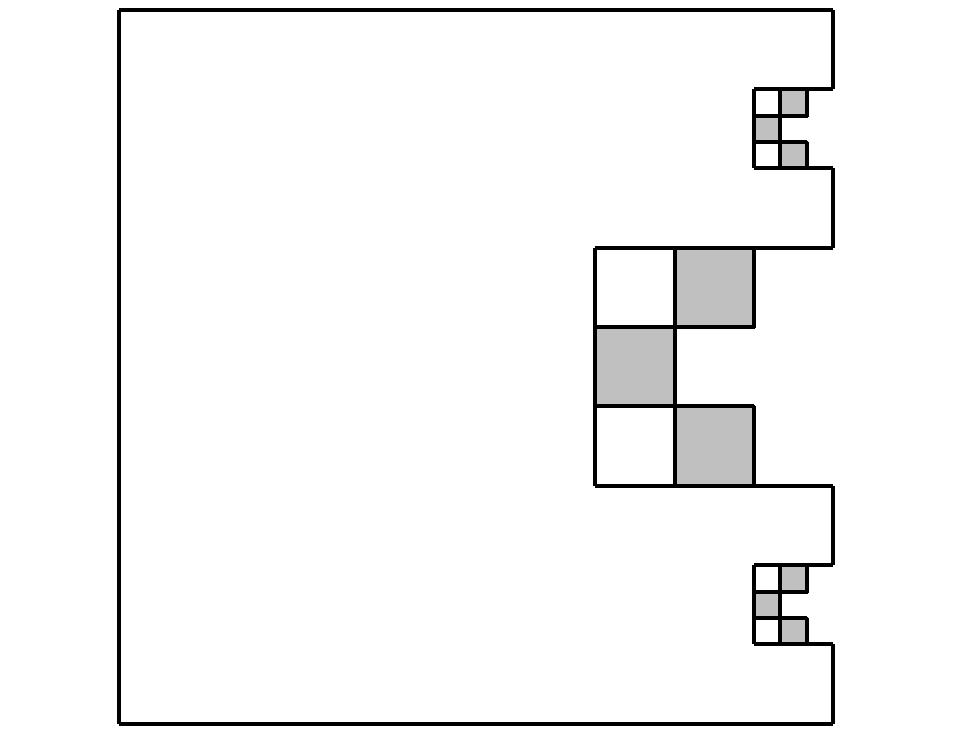}
\caption{\sl The curve $S_2$ and the projection of six cubes contained in the third stage of the Menger sponge, $M_3$.}
\label{W2}
\end{center}
\end{figure}

\noindent {\it $m^{th}$ stage}. Recall the index set $D_{m-1}$ of Section 3.1, $|D_{m-1}|=2^{m-1}$. We repeat the same process on each middle-third interval $T_{m_i}$, $i=1,\ldots,3\cdot2^{m-1}$, corresponding to the three segments $[1-\frac{1}{3^m},1]\times\{\frac{3k+1}{3^m}\}\times\{0\}$, $\{1-\frac{1}{3^m}\}\times[\frac{3k+1}{3^m},\frac{3k+2}{3^m}]\times\{0\}$, and $[1-\frac{1}{3^m},1]\times\{\frac{3k+2}{3^m}\}\times\{0\}$, one triple for each $k\in D_{m-1}$ (\emph{i.e.}, exactly for the $2^{m-1}$ notches newly built at the $m$-th stage of the squareflake curve, each notch contributing its three sides). Each $T_{m_i}$ lies entirely on $\mathcal{S}$. Consider the corresponding cubes $Q_{m_i}$ contained in the $(m+1)$-th stage of the Menger sponge, $M_{m+1}$. Each $Q_{m_i}$ is again homothetically equivalent to $M$. By the proof of Theorem A, there exists an isotopic copy of our knot $K$, say $K_{m_i}$, contained in $M\cap Q_{m_i}$, and we can arrange that $K_{m_i}\cap K_{m_j}=\emptyset$ for $i\neq j$ and that an unknotted segment $e_{m_i}$ of $K_{m_i}$ lies in the corresponding middle-third segment $T_{m_i}$. We define $K_{m}$ as the connected sum of $K_{m-1}$ with $K_{m_1}$, $K_{m_2},\,\ldots,\,K_{m_l}$, where
$l=3\cdot2^{m-1}$ (so that the counts match the first two stages exhibited above: $l=3$ for $m=1$, $l=6$ for $m=2$), along the corresponding segments $e_{m_i}$; that is,
$$
K_m\cong K_{m-1}\#K_{m_1}\# K_{m_2}\#\,\cdots\,\# K_{m_l}.
$$
Again, $K_m$ lies entirely in the Menger sponge, since $K_{m-1}$ and each $K_{m_i}$ do. Exactly as in Section 3.1, once a segment $T_{m_i}$ has been used for an insertion at stage $m$, it is never touched again at a later stage: stage $m+1$ only performs insertions on the $3\cdot2^m$ fresh sides of the $2^m$ notches newly built at the $(m+1)$-th stage of $\mathcal S$. \\

\begin{rems}\label{differentknots}
1. In the construction above, each knot $K_{m_i}$ can be a different tame knot for each $m_i$.\\
2. We can also add more knots at each step. Indeed, we can subdivide each middle-third segment into $2n+1$ subsegments, choose $n$ non-consecutive subsegments $T_{1_j}$, and proceed as before, now taking subcubes $Q_{i_k}$ ($k=1,2,\ldots,n$) of the $(2n+1)$-st stage of the Menger sponge. This variant is used, with $n$ depending on the stage, in the proof of Theorem 2 in Section 3.3.
\end{rems}

\noindent {\bf The embedded limit $\mathcal K$.} As in Section 3.1, we now define $\mathcal K$ as an honest subset of $\mathbb R^3$, using Lemma \ref{embed}. Identify $\mathbb S^1$ with $\mathcal S$ via the embedding $\phi$ of Lemma \ref{curve}. The segments $T_{m_i}$, $m\geq1$, $i=1,\ldots,3\cdot2^{m-1}$, are pairwise disjoint closed sub-arcs of $\mathcal S$ (this is immediate from the disjointness of the notches established in Section 3.1: distinct notches have disjoint closures except at the two points shared with $\phi_0(C)$, and the three sides of a single notch meet only at their common corners), and they accumulate only on the Cantor set $\mathcal C\subset e$: for every $\varepsilon>0$, only finitely many $T_{m_i}$ have diameter $\geq\varepsilon$, since $\mathrm{diam}(T_{m_i})\leq 3^{-m}$. Exactly as in the proof of Lemma \ref{curve}, replace each $T_{m_i}$ by a constant-speed parametrization $\gamma_{m_i}$ of $e_{m_i}\cup(K_{m_i}\setminus e_{m_i})$, \emph{i.e.}, of the arc obtained from $K_{m_i}$ by cutting along the unknotted segment $e_{m_i}\subset T_{m_i}$ and inserting the resulting knotted arc into $T_{m_i}$ (this is precisely the connected-sum replacement described above). Since $K_{m_i}\subset Q_{m_i}$ and $Q_{m_i}$ is a cube of side length $3^{-(m+1)}$, comparable to $\mathrm{diam}(T_{m_i})=3^{-(m+1)}$, hypothesis (b) of Lemma \ref{embed} (or rather its extension below) holds with a fixed constant $\kappa$. Lemma \ref{embed} -- whose proof used only continuity/injectivity of the base map, pairwise disjointness of the replaced arcs, and the diameter bound, none of which depend on the base domain being literally $[0,1]$ with $C$ the triadic Cantor set -- applies verbatim with $J=\mathbb S^1$, $\phi_0=\phi$, $C=\mathcal C$, and $\{(a_i,b_i)\}$ replaced by $\{T_{m_i}\}_{m,i}$, and produces an embedding $\Phi:\mathbb S^1\to\mathbb R^3$. We define
$$
\mathcal K:=\Phi(\mathbb S^1)=\mathcal S\setminus\Big(\bigcup_{m,i}T_{m_i}\Big)\ \cup\ \mathcal C\ \cup\ \bigcup_{m,i}\gamma_{m_i}(T_{m_i}).
$$
This is the honest point-set definition of the wild knot; as before, ``$\mathcal K=\varinjlim_m K_m$'' is informal shorthand for this object, made precise by the next lemma.

\begin{lem}\label{Kembed}
The set ${\mathcal{K}}$ defined above is the image of a topological embedding $\Phi:\mathbb S^1\to\mathbb R^3$; in particular, it is homeomorphic to $\mathbb{S}^1$. Moreover, $K_m\to\mathcal K$ in the Hausdorff metric as $m\to\infty$, at rate $O(3^{-m})$, and $\mathcal K$ is contained in the Menger sponge.
\end{lem}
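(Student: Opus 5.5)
The plan is to run the same three-part scheme that proved Lemma \ref{curve}: embedding, Hausdorff convergence, and containment in $M$.

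\emph{Embedding.} I would apply the circle version of Lemma \ref{embed}, with base map $\phi_0=\phi:\mathbb S^1\to\mathcal S$ from Lemma \ref{curve}. The replaced arcs are the $T_{m_i}$, and on each $T_{m_i}$ the replacement is $\gamma_{m_i}$. The hypotheses are checked as follows.
\begin{enumerate}
\item[(a)] This holds because the knotted arc $K_{m_i}\setminus \Int(e_{m_i})$ has the same endpoints as $e_{m_i}$, and the rest of $T_{m_i}$ is kept.
\item[(b)] This holds because $\gamma_{m_i}(T_{m_i})\subset Q_{m_i}$, so its diameter is at most $\sqrt3\,3^{-(m+1)}$, which is a fixed multiple of $\mathrm{diam}\,T_{m_i}$.
\item[(c)] This needs that the interiors of the inserted arcs avoid each other and avoid $\mathcal S$ away from $e_{m_i}$.
\end{enumerate}
One caveat concerns the proof of Lemma \ref{embed}. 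There, continuity at points of $C$ used that the removed intervals form a null family accumulating only on $C$. Here the $T_{m_i}$ can share endpoints (the corners of a notch), so I would group the three sides of each notch into one replaced arc. The resulting family is a null family of disjoint open arcs in $\mathbb S^1$, accumulating only on $\phi^{-1}(\mathcal C)$, and with that grouping the argument of Lemma \ref{embed} goes through verbatim. Compactness then gives that $\Phi$ is a homeomorphism onto $\mathcal K$.

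\emph{The main obstacle} is (c): the stage-$m$ copies $K_{m_i}$ must meet $\mathcal S$ and each other only inside $T_{m_i}$. I would first fix the choice of cubes: $Q_{m_i}$ is the level-$(m+1)$ cube of $M_{m+1}$ having $T_{m_i}$ as an edge and lying on the side of the notch opposite the square $F$ (say, in the half-space $z\ge0$ just above the face and outside the notch region). Distinct notches then give cubes with disjoint interiors. The three cubes of one notch I would take to meet at most along edges containing only corner points of $\mathcal S$. I would then shrink the Theorem A embedding inside each $Q_{m_i}$. Since the arc presentation uses Cantor endpoints strictly interior to $[0,1]$ except along the chosen face, $K_{m_i}$ meets $\partial Q_{m_i}$ only in $e_{m_i}$, with $e_{m_i}\subset \Int T_{m_i}$. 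This yields (c), and it is the step whose bookkeeping I expect to be most delicate.

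\emph{Hausdorff convergence.} $K_m$ and $\mathcal K$ coincide except on the replaced arcs of stages $>m$ and the corresponding pieces of $\mathcal S$. All of these lie in the union of the notch boxes and cubes $Q_{j_i}$ with $j>m$, each of diameter at most $\sqrt3\,3^{-(m+1)}$. Every such box touches a point of $\mathcal C$ common to both sets, so each point of one set lies within $O(3^{-m})$ of the other. The rate $O(3^{-m})$ follows, with $\mathcal S$ versus $S_m$ handled as in Lemma \ref{curve}.

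\emph{Containment.} Each point of $\mathcal K$ lies in one of three places:
\begin{enumerate}
\item[(i)] in $\mathcal S\subset M$, by Lemma \ref{curve};
\item[(ii)] in $\mathcal C\subset M$, by Lemma \ref{dust-points};
\item[(iii)] in some $K_{m_i}\subset M\cap Q_{m_i}$, by the self-similarity $Q_{m_i}\cap M=$ homothetic copy of $M$ (valid because $Q_{m_i}$ is a cube of $M_{m+1}$) together with Theorem A.
\end{enumerate}
Hence $\mathcal K\subset M$.
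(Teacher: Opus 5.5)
Your outline is the paper's proof: Lemma~\ref{embed} on $\mathbb S^1$ with base map $\phi$, hypothesis (b) from $K_{m_i}\subset Q_{m_i}$, the rate from the diameters of the cubes $Q_{m'_i}$ with $m'>m$, and containment from Theorem~A and Lemma~\ref{curve}. Two of your precautions are unnecessary but harmless:
\begin{itemize}
\item The $T_{m_i}$ are the middle thirds of the notch sides (e.g.\ $T_{1_1}=[\frac79,\frac89]\times\{\frac13\}\times\{0\}$), so they are already pairwise disjoint and no grouping is needed.
\item $K_m$ is built on the full curve $\mathcal S$, so $K_m$ and $\mathcal K$ differ only inside the cubes $Q_{m'_i}$, $m'>m$, and no comparison with $S_m$ is needed.
\end{itemize}
The paper does not argue hypothesis (c) at all; it is built into the construction (``we can arrange that\dots''). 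The one place you go beyond the paper is your justification of (c), and that justification fails as written.

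\emph{The prescribed cube need not exist.} Take the vertical side $\{1-3^{-m}\}\times[\frac{3k+1}{3^m},\frac{3k+2}{3^m}]\times\{0\}$ of a notch, and the level-$(m+1)$ cube on the outer side of its middle third. Every interior point of that cube has ternary digit $1$ in position $m$ of both its $x$- and $y$-coordinate. So the cube lies in a face-centre cube removed at stage $m$; for $m=1$ it is $[\frac59,\frac23]\times[\frac49,\frac59]\times[0,\frac19]\not\subset M_2$. Only the cube inside the notch square is admissible there.

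\emph{The boundary claim is false.} You claim $K_{m_i}\cap\partial Q_{m_i}=e_{m_i}$, but this fails for every Theorem~A embedding. All its vertical arcs lie on one face of the cube and all its horizontal arcs on the opposite face. Choosing the Cantor endpoints in $(0,1)$ only keeps these arcs off the edges, not off the faces.

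\emph{A repair.} What (c) actually needs is only $K_{m_i}\cap\mathcal S=e_{m_i}$ and $K_{m_i}\cap K_{m'_{i'}}=\emptyset$ for $(m',i')\neq(m,i)$. One way to get this:
\begin{itemize}
\item Take every $Q_{m_i}$ inside its notch square. Distinct notch squares have disjoint closures and interiors missing $\mathcal S$, and the three cubes of one notch meet only along vertical edges.
\item Use a symmetry of $I^3$ (these preserve $M$) so that the two carrier faces of the Theorem~A picture are the vertical face of $Q_{m_i}$ through $T_{m_i}$ and the face opposite to it.
\item Use the Cantor endpoint $0$ for exactly one height, so that exactly one arc lies on $T_{m_i}$ and serves as $e_{m_i}$. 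Take all other coordinates in $(0,1)$.
\end{itemize}
Then $K_{m_i}\setminus e_{m_i}$ lies in $\Int Q_{m_i}$, in the relative interiors of the two carrier faces and of the bottom face, and in the relative interior of one bottom edge. None of these meets $\mathcal S$ or any other chosen cube.
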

\noindent{\it Proof.} As in Lemma \ref{curve}: the first and last statements are immediate from the construction ($K_{m_i}\subset M\cap Q_{m_i}$ by Theorem A, and every other point of $\mathcal K$ lies in $\mathcal S\subset M$ by Lemma \ref{curve}); the Hausdorff estimate follows because $K_m$ and $\mathcal K$ differ only inside the union of the cubes $Q_{m'_i}$ of stage $m'>m$, each of diameter $\sqrt3\cdot3^{-(m+1)}$. $\square$

\begin{lem}\label{wild}
Let ${\mathcal{K}}$ be constructed as above, with each knot $K_{m_i}$ nontrivial except for finitely many indices. Then ${\mathcal{K}}$ is a wild knot. Moreover, ${\mathcal{K}}$ is wild at each point of the usual Cantor set ${\mathcal{C}}$ contained in the unit segment $e=\{(1,t,0)\,\,:\,\,t\in [0,1]\}\subset M$, and it is locally flat at every other point of $\mathcal K$.
\end{lem}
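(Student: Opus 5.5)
I will split $\mathcal K$ into its two kinds of points: the points of $\mathcal K\setminus\mathcal C$ and the points of the Cantor set $\mathcal C$. Local flatness off $\mathcal C$ should be routine. Wildness at each point of $\mathcal C$ is the real content, and wildness of $\mathcal K$ then follows, since $\mathcal C\neq\emptyset$.

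\emph{Locally flat points.} Let $x\in\mathcal K\setminus\mathcal C$. By the description in Lemma~\ref{Kembed}, $x$ lies in $\mathcal S\setminus e$ or in one of the inserted arcs $\gamma_{m_i}(T_{m_i})$. In either case, the accumulation property of the $T_{m_i}$ (only finitely many have diameter $\geq\varepsilon$, and they accumulate only on $\mathcal C$) gives a small round ball $U$ about $x$ disjoint from $\mathcal C$. This $U$ meets only finitely many of the cubes $Q_{m_i}$. Hence $\mathcal K\cap U=K_m\cap U$ for some finite stage $m$. Each $K_m$ is a finite connected sum of tame polygonal knots with the polygonal pieces of $\mathcal S$, so it is a polygonal (tame) knot. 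In particular $(U,U\cap K_m)$ is locally flat at $x$, and therefore $\mathcal K$ is locally flat at $x$.

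\emph{Wild points.} Fix $p\in\mathcal C$ and suppose, for contradiction, that $p$ is locally flat. Then there is a neighborhood $V$ with $(V,V\cap\mathcal K)\cong(\Int\mathbb B^3,\Int\mathbb B^1)$. Shrinking inside this chart, we obtain a closed ball $B$ about $p$, with $\partial B$ a tame sphere meeting $\mathcal K$ in exactly two points, such that $\alpha=B\cap\mathcal K$ is an unknotted spanning arc of $B$.

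Now, $p$ is a limit of notches of $\mathcal S$. For every $m$, the stage-$m$ interval of the Cantor construction containing $p$ has two children, and between them is a notch built at a later stage. So arbitrarily close to $p$ there are notch sides $T_{m_i}$, with their cubes $Q_{m_i}$ of diameter $\sqrt3\cdot3^{-(m+1)}$. By hypothesis, all but finitely many of the summands $K_{m_i}$ are nontrivial, so infinitely many of these nearby cubes contain a nontrivial summand inside $\Int B$. Pick any one such summand; it sits on a sub-arc $\beta\subset\alpha$ and is isolated in a small ball $B'\subset\Int B$ with $(B',B'\cap\mathcal K)$ a nontrivial knotted ball pair.

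\emph{Main obstacle.} I have to turn ``infinitely many nontrivial summands accumulate at $p$'' into a genuine contradiction with the unknottedness of $\alpha$. Finitely many summands could sit inside an unknotted arc only if they cancel, and they cannot: each tame summand is nontrivial and there is no inverse under connected sum. The difficulty is that $\alpha$ itself is wild-looking near $p$, so additivity cannot be applied naively. My first attempt is the fundamental group criterion of the Remark after the definition of local flatness, localized at $p$. For any neighborhood $W$ of $p$, I would show that $\pi_1(W\setminus\mathcal K)$ fails to be finitely generated, using van Kampen along a sequence of tame spheres that separate off successive summands (the cube boundaries of $M_{m+1}$ meet $\mathcal K$ in two points each). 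Each nontrivial summand contributes a non-abelian factor, via nontrivial knot groups amalgamated over $\mathbb Z$. An infinite free product with amalgamation over $\mathbb Z$ of such groups is not finitely generated. This contradicts local flatness, since a locally flat point has a neighborhood basis $W$ with $\pi_1(W\setminus\mathcal K)\cong\mathbb Z$. As a fallback, I would use the Fox--Artin style argument: taking $B$ small enough that the arc $\alpha$ contains infinitely many nontrivial tame summands, its exterior group surjects onto arbitrarily large amalgamations, contradicting triviality of an unknotted ball pair.
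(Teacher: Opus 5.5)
Your plan is essentially the paper's proof. Local flatness off $\mathcal C$ holds because a small ball there meets only finitely many PL pieces of $\mathcal K$. Wildness at $p\in\mathcal C$ holds because van Kampen presents $\pi_1$ of the punctured neighborhood as an infinite amalgam of nontrivial knot groups over meridian subgroups $\mathbb Z$, which is not finitely generated; your framing as a contradiction with a chart neighborhood, where $\pi_1\cong\mathbb Z$, is exactly the form the conclusion needs.

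When you write it out, two details need fixing. First, make explicit (the paper leaves this implicit) that each meridian has infinite order in the outer piece: it is nonzero in $H_1(\mathbb S^3\setminus\mathcal K)\cong\mathbb Z$ by Alexander duality, and without this the amalgam would collapse, since a knot group is normally generated by its meridian. Second, isolate the summands by small tame balls rather than by cube boundaries of $M_{m+1}$; those boundaries carry large parts of the Theorem A copies of $K$, so they meet $\mathcal K$ in far more than two points.
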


\noindent{\it Proof.} (Compare \cite{gaby}, \cite{HVD}, and \cite{DH}.)

\smallskip
\noindent\emph{Wildness at points of $\mathcal C$.} Fix $x\in\mathcal C$ and let $U$ be any open ball centered at $x$. Because the diameters $\mathrm{diam}(T_{m_i})=3^{-(m+1)}$ tend to $0$ and the arcs $T_{m_i}$ accumulate exactly on $\mathcal C$, there is $m_0$ such that $U\cap\mathcal K$ contains $\gamma_{m_i}(T_{m_i})$ for every $(m,i)$ with $m\geq m_0$ and $T_{m_i}\cap U\ne\emptyset$; by the self-similar structure of the construction, infinitely many such pairs exist for every $U$, and by hypothesis all but finitely many of the corresponding $K_{m_i}$ are nontrivial knots.\\


\noindent We will compute $\pi_1(U\setminus \mathcal K)$ directly, from the actual embedded set $\mathcal K$ constructed above, using a locally finite van Kampen decomposition. Write $N_j=\gamma_{m_j i_j}(T_{m_j i_j})$ for the corresponding knotted arc and $B_j$ for a small closed ball neighborhood of $N_j$ in $U$, chosen small enough that the $B_j$ are pairwise disjoint, disjoint from $\mathcal C$, and meet $\mathcal K$ only in $N_j$ (possible since the $N_j$ are pairwise disjoint compacta accumulating only on $\mathcal C$).
List the pairs $(m,i)$ with $\gamma_{m_i}(T_{m_i})\subset U$ and $K_{m_i}$ nontrivial as $(m_1,i_1),(m_2,i_2),\ldots$ (countably many, by the previous paragraph, and ordered so that $\mathrm{diam}\,B_j\to0$). For $n\geq1$ define
$$
W_n:=U\setminus\Big(N_1\cup\cdots\cup N_n\ \cup\ \bigcup_{j>n}B_j\Big),
$$
\emph{i.e.}, $W_n$ is $U$ with the $n$ balls $B_1,\ldots,B_n$ \emph{drilled} (only the knotted arc $N_j$ removed from each) and every other ball $B_j$ ($j>n$) removed \emph{solid}. Since only finitely many balls are drilled at stage $n$ and the solid balls get un-plugged one at a time as $n$ increases, $\{W_n\}$ is an increasing sequence of open subsets of $U\setminus\mathcal K$ with $\bigcup_nW_n=U\setminus\mathcal K$ (a point of $U\setminus\mathcal K$ either avoids every $B_j$, in which case it lies in every $W_n$, or lies in some fixed $B_{j_0}\setminus N_{j_0}$, in which case it lies in $W_n$ for all $n\geq j_0$).\\

\noindent By van Kampen's theorem, applied once for each of the $n$ drilled balls,
$$
\pi_1(W_n)\;\cong\; H *_{\langle\mu_1\rangle} G_1 *_{\langle\mu_2\rangle}G_2*_{\langle\mu_3\rangle}\cdots *_{\langle\mu_n\rangle} G_n,
$$
where: $H:=\pi_1\big(U\setminus\bigcup_jB_j\big)$ is the fundamental group of $U$ with \emph{every} ball $B_j$ removed solid -- a single fixed space, not depending on $n$ or on the knot types $K_{m_ji_j}$ at all, since none of the $N_j$ are exposed in it; $G_j:=\pi_1(B_j\setminus N_j)$ is the knot group of $K_{m_ji_j}$ (since $B_j$ is a $3$-ball meeting $\mathcal K$ only in the properly embedded, boundary-unknotted arc $N_j$, the pair $(B_j,N_j)$ is a knotted ball-arc pair of the same type as $(\mathbb S^3,K_{m_ji_j})$, so $\pi_1(B_j\setminus N_j)\cong\pi_1(\mathbb S^3\setminus K_{m_ji_j})$); and $\mu_j\in G_j$ is a meridian of $N_j$, identified with the corresponding generator of $\pi_1(\partial B_j\setminus N_j)\cong\mathbb Z$ (note $\partial B_j\setminus N_j$ is a $2$-sphere minus two points, homotopy equivalent to a circle -- the meridian -- which is the entire gluing locus in this application of van Kampen; unlike the connected-sum-of-two-knots gluing along a peripheral torus, here only one side, $B_j\setminus N_j$, contributes a nontrivial knot group, and the outer piece $H$ sees only the meridian circle, not a longitude).\\

\noindent Since $H$ is the same group for every $n$, the sequence of amalgams $A_n:=H*_{\langle\mu_1\rangle}G_1*\cdots*_{\langle\mu_n\rangle}G_n\cong\pi_1(W_n)$ is genuinely increasing, $A_n\subset A_{n+1}$ (via the inclusion-induced map that is the identity on $H,G_1,\ldots,G_n$ and includes $\langle\mu_{n+1}\rangle\hookrightarrow G_{n+1}$), and $U\setminus\mathcal K=\bigcup_nW_n$. By the continuity property of $\pi_1$ under increasing unions of open sets (a standard consequence of van Kampen's theorem for directed unions, applicable here because each loop and each homotopy in $U\setminus\mathcal K$ is compact and hence lies in some $W_n$),
$$
\pi_1(U\setminus\mathcal K)\;\cong\;\varinjlim_n \pi_1(W_n)\;\cong\; H *_{\langle\mu_1\rangle}G_1*_{\langle\mu_2\rangle}G_2*_{\langle\mu_3\rangle}\cdots
$$
an iterated amalgamated free product of infinitely many knot groups $G_j$, each amalgamated over its own meridian subgroup $\langle\mu_j\rangle\cong\mathbb Z$, with a fixed base group $H$.\\

\smallskip
\noindent\emph{This group is infinitely generated.} Since $K_{m_ji_j}$ is a nontrivial knot, $G_j$ is a nonabelian group (the group of a nontrivial knot is never abelian, e.g.\ because its abelianization is always $\mathbb Z$ but the group itself has a nontrivial commutator subgroup, or simply because $\mathbb S^3\setminus K_{m_ji_j}$ is not a $K(\mathbb Z,1)$ for the trivial reason that $\mathbb S^3\setminus\mathrm{unknot}\simeq \mathbb S^1$ is the only knot complement homotopy equivalent to a circle, by the classical theorem that the unknot is the only knot with abelian, equivalently infinite cyclic, group); in particular $\langle\mu_j\rangle\subsetneq G_j$ is a proper subgroup. Recall $A_n\cong\pi_1(W_n)$ from above, so that $\pi_1(U\setminus\mathcal K)=\bigcup_n A_n$ as an increasing union of subgroups (a ``graph of groups'' along a line, \emph{i.e.}, an iterated free product with amalgamation, which is the fundamental group of the graph of groups consisting of the vertex groups $H, G_1, G_2,\ldots$ joined in a line by edge groups $\langle\mu_j\rangle$). By the normal form theorem for amalgamated free products (Bass--Serre theory), an element of $G_{n+1}\setminus\langle\mu_{n+1}\rangle$ is never conjugate, in $A_{n+1}$, into $A_n$; in particular $A_{n+1}\ne A_n$, so the union $\bigcup_n A_n$ is a strictly increasing union of subgroups. If $\pi_1(U\setminus\mathcal K)$ were finitely generated by $g_1,\ldots,g_k$, then, since each $g_r$ lies in $A_{n_r}$ for some $n_r$ (every element of the direct limit lies in some finite stage), all $g_r$ would lie in $A_N$ for $N=\max_r n_r$, forcing $\pi_1(U\setminus\mathcal K)=A_N$, contradicting that $A_{N+1}\supsetneq A_N$. Hence $\pi_1(U\setminus\mathcal K)$ is infinitely generated.\\

\noindent Since $U$ was an arbitrary open ball centered at $x$, no neighborhood of $x$ has simply connected (or even finitely-generated $\pi_1$) punctured complement in $\mathcal K$; in particular $x$ is not locally flat, \emph{i.e.}, $x$ is a wild point.\\

\smallskip
\noindent\emph{Local flatness away from $\mathcal C$.} Let $x\in\mathcal K\setminus\mathcal C$. Since the notches and insertions of $\mathcal K$ accumulate only on $\mathcal C$ (Sections 3.1--3.2), $x$ has a neighborhood meeting only finitely many pieces of $\mathcal K$, so $\mathcal K$ coincides with a finite PL arc near $x$, which is locally flat there (this is exactly Case 1 of the proof of Theorem 1 below, restated here since it does not depend on the density of nontrivial summands and so applies verbatim under the present hypothesis). Under the hypothesis of this lemma (all but finitely many $K_{m_i}$ nontrivial), \emph{every} point of $\mathcal C$ is wild by the argument above, so there are no further points to examine; the more delicate case of points in $\mathcal C$ that remain tame because only finitely many nontrivial summands accumulate there is precisely the setting of Theorem 1 below (Case 2 of its proof), where the hypothesis on $\mathcal K$ is different (only finitely many, not all but finitely many, summands are nontrivial near a given tame point). $\square$\\

\noindent We are now ready to prove our first main result.

\begin{main1}\label{main1-restated}
For every finite subset $T=\{p_1,\ldots,p_n\}$ of the standard Cantor set $\mathcal{C}$ contained in the edge $e\subset M$ described in Section~3.2, there is an explicit, recursively constructed wild knot $\mathcal{K}_T\subset M$ whose set of wild points is exactly $T$. Consequently, the Menger sponge $M$ contains infinitely many pairwise non-equivalent, explicitly constructed wild knots, distinguished by the cardinality of their wild point sets.
\end{main1}

\noindent{\it Proof.} Let $T=\{p_1,\,p_2,\,\ldots,\,p_n\}$ be a finite subset of the Cantor set ${\mathcal{C}}$ contained in the edge $e\subset M$. We construct a wild knot in the Menger sponge whose set of wild points is exactly $T$. Doing this for each $n\in\mathbb{N}$ yields wild knots with $n$ wild points. Since equivalent wild knots must have the same number of wild points, these knots are pairwise nonequivalent.\\

\noindent Consider the knot $\mathcal{K}=\mathcal K_T$ constructed above (Lemma \ref{Kembed}). We now specify the knots $K_{m_j}$ at each stage. For each $p_i\in T$, $i=1,2,\ldots,n$, by construction there exists a sequence of segments $S_m(p_i)\subset e$ such that $p_i=\bigcap_m S_m(p_i)$, where $S_m(p_i)$ belongs to the $m$-th stage of the Cantor set ${\mathcal{C}}\subset e$. For each $m\in\mathbb{N}$ and $i=1,\ldots,n$, we require that the knot $K_m(p_i)$ located on the segment of $\mathcal{S}$ that meets $S_m(p_i)$ at an endpoint be non-trivial. All other $K_{m_s}$ in $\mathcal{K}$ are taken to be the \emph{unknot} (not merely ``trivial,'' but literally the straight, unknotted connect-sum summand -- \emph{i.e.}, we simply do not modify $e_{m_s}$ at those indices, so $K_{m_s}$ contributes nothing beyond a straight segment) (see Figure \ref{W2}).\\

\noindent {\bf Wildness at $T$.} Let $p_i\in T$ and let $U$ be an open neighborhood of $p_i$. By construction, infinitely many nontrivial knots $K_m(p_i)$ lie in $U$, and the argument in the proof of Lemma \ref{wild} (which used only that infinitely many nontrivial summands accumulate at the point, not that \emph{every} summand near the point is nontrivial) shows that $\pi_1(U\setminus\mathcal{K})$ is infinitely generated; hence $p_i$ is a wild point.\\

\smallskip
\noindent {\bf Local flatness away from $T$.} Let $x\in\mathcal{K}\setminus T$. We exhibit an explicit ambient homeomorphism straightening $\mathcal K$ near $x$, which is the local flatness proof.

\noindent \emph{Case 1: $x\notin\mathcal C$.} Then $x$ has a neighborhood meeting only finitely many of the pieces of $\mathcal K$ (the notches and insertions accumulate only on $\mathcal C$), so $\mathcal K$ coincides with a finite polygonal (PL) arc near $x$. A PL arc is locally flat at each of its points (its complement is locally simply connected there, by a direct local computation, or by Bing's theorem that locally tame sets are tame \cite{Bi}); hence $x$ is locally flat.\\

\noindent \emph{Case 2: $x\in\mathcal C\setminus T$.} Since $x\notin T$, by construction there is $m_x$ such that every $K_{m_i}$ with $T_{m_i}$ meeting a small enough neighborhood of $x$ and $m\geq m_x$ is unknot -- precisely, choose $m_x$ so large that the unique branch $S_m(x)$ of the Cantor set containing $x$ at stage $m\geq m_x$ contains no $S_m(p_i)$, $i=1,\ldots,n$ (possible since $x\ne p_i$ for all $i$, so the branches separate at some finite stage). Let $U_x$ be the corresponding neighborhood of $x$: the union of the (shrinking, nested) segments $S_m(x)$, $m\geq m_x$, together with the notches and insertions built on them.\\

\noindent For $m\geq m_x$, every insertion $K_{m_i}$ with $T_{m_i}\subset U_x$ is, by hypothesis, the straight (unknotted) segment $e_{m_i}$ itself -- \emph{i.e.}, \emph{no geometric modification is made at all} at those stages within $U_x$ beyond the (already tame, PL) squareflake notches of Section 3.1. Hence $\mathcal K\cap U_x=\mathcal S\cap U_x$: within $U_x$, the wild-knot construction contributes nothing beyond the squareflake curve. It therefore suffices to show that $\mathcal S$ itself is locally flat at every point of $\mathcal C$, which we now do, once and for all (this also completes the deferred part of Lemma \ref{wild}).\\

\smallskip
\noindent {\bf Lemma (local flatness of $\mathcal S$ along $\mathcal C$).} \emph{Every point $x\in\mathcal C\subset\mathcal S$ is a locally flat point of $\mathcal S$.}\\

\noindent{\it Proof.} Fix $x\in\mathcal C$. For $m\geq1$ let $I_m(x)=[a_m,b_m]$ be the level-$m$ triadic interval containing $x$ (so $b_m-a_m=3^{-m}$, $I_{m+1}(x)\subset I_m(x)$, $\bigcap_m I_m(x)=\{x\}$). At most one middle third of $I_m(x)$ is ever removed at stage $m+1$ (namely the middle third of $I_m(x)$ itself, if it happens to be a $T_{m+1,k}$-type interval; whether or not it is, $I_{m+1}(x)$ is one of the two outer thirds of $I_m(x)$, hence disjoint from the middle third notch built at stage $m+1$, if any, on $I_m(x)$). Consequently: \emph{every notch of $\mathcal S$ that meets $I_m(x)$, other than possibly one built on $I_m(x)$'s own middle third, is entirely disjoint from $I_{m}(x)$}. The only notches meeting $I_m(x)$ at all are (i) at most one notch of diameter $\leq \sqrt2\cdot3^{-(m+1)}\ll 3^{-m}$ sitting inside $I_m(x)\setminus I_{m+1}(x)$, and (ii) the (recursively, arbitrarily small) notches inside $I_{m+1}(x)$ itself.\\

\noindent This gives a nested sequence of balls $B_m=B(x,2\cdot3^{-m})\supset B_{m+1}$ such that $\mathcal S\cap B_m$ consists of: a single straight core arc through $x$ of diameter $O(3^{-m})$ (coming from $I_{m}(x)\setminus I_{m+1}(x)$, translated to $\phi_0$-coordinates), together with at most one attached notch of diameter $O(3^{-(m+1)})$, together with $\mathcal S\cap B_{m+1}$ (the same picture one level deeper). In particular, there is an ambient isotopy $h_m$ of $\mathbb R^3$, supported in the shell $B_m\setminus B_{m+1}$, straightening the one possible notch in that shell to the straight segment $I_m(x)\setminus I_{m+1}(x)$, and fixing $\partial B_m\cup\partial B_{m+1}$ pointwise, with $\|h_m-\mathrm{id}\|_\infty\leq \mathrm{diam}(B_m)=4\cdot3^{-m}$ (such an isotopy exists because the notch and the straight segment are two unknotted, disjoint-except-at-endpoints PL arcs in the shell sharing the same two endpoints on $\partial B_m\cup\partial B_{m+1}$; two such arcs always cobound an embedded PL disk in the shell -- here, visibly, the small planar rectangle swept out between the three-segment notch and the straight segment -- and pushing across a collar of that disk gives an ambient isotopy, supported near the disk and hence in the shell, carrying one arc to the other and fixing the shell's boundary spheres). Composing $h_m$ over all $m\geq m_0$, in order of decreasing shell radius, gives a well-defined map
$$
h=\lim_{m\to\infty}\big(h_{m_0}\circ h_{m_0+1}\circ\cdots\circ h_m\big)
$$
because the supports $B_m\setminus B_{m+1}$ are pairwise disjoint and shrink to $\{x\}$ (so the infinite composition is, at every point other than $x$, eventually constant, and at $x$ itself both sides converge to $x$); $h$ is continuous, with continuous inverse built the same way from $h_m^{-1}$, so $h$ is a homeomorphism of $\mathbb R^3$ supported in $B_{m_0}$, fixing $x$, and $h(\mathcal S\cap B_{m_0})$ is the straight segment $I_{m_0}(x)$ (in $\phi_0$-coordinates). Hence $x$ is a locally flat point of $\mathcal S$. $\square$\\

\noindent Returning to Case 2: since $\mathcal K\cap U_x=\mathcal S\cap U_x$ and $\mathcal S$ is locally flat at $x\in\mathcal C$ by the lemma just proved, $\mathcal K$ is locally flat at $x$ as well.

\smallskip
\noindent Combining Cases 1 and 2, every $x\in\mathcal K\setminus T$ is locally flat, and every $p_i\in T$ is wild. Therefore, the set of wild points of $\mathcal K$ is exactly $T$. This completes the proof of Theorem 1. $\square$\\

\subsection{Wild knots of dynamically defined type in the Menger sponge}

\noindent Before proving our next result, we recall the construction of wild knots of dynamically defined type; for details, see \cite{DH}.\\

\noindent Let $K$ be a tame knot. An $n$-beaded necklace $T^{\circ}$ subordinate to $K$ consists of the union of $n$ disjoint closed round 3‑balls (called pearls) $B_{1}:=B_{r_1}(c_1)$, $B_{2}:=B_{r_2}(c_2),\ldots , B_{n}:=B_{r_n}(c_n)$ in $\mathbb{S}^{3}$ (so $B_{i}\cap B_{j}=\emptyset$ for $i\neq j$), such that $c_i\in K$ and the segment of $K\cap B_i$ is unknotted for each $i$. An $n$-pearl chain necklace $T$ is the union $T=T^{\circ}\cup K$ (see Figure \ref{T2}).\\

\begin{figure}[h] 
\begin{center}
\includegraphics[height=3cm]{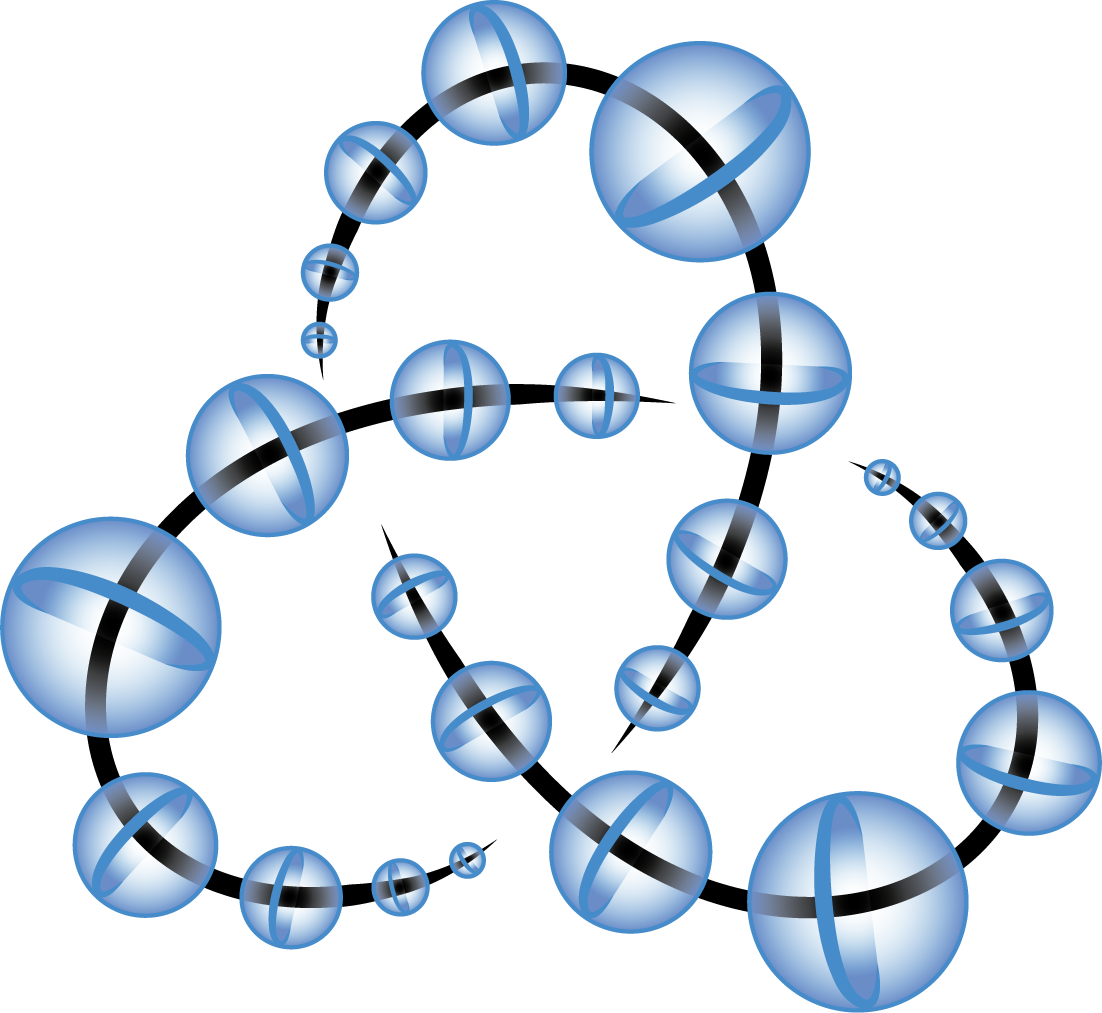}
\end{center}
\caption{\sl A pearl chain necklace.} 
\label{T2}
\end{figure}

\noindent Let $\Gamma_{T^{\circ}}$ be the group generated by reflections $I_{j}$ through $\Sigma_{j}=\partial B_j$ ($j=1,\ldots,n$). Then $\Gamma_{T^{\circ}}$ is a discrete subgroup of $M\ddot{o}b(\mathbb{S}^{3})$ whose limit set is a Cantor set (\cite{kap1}, \cite{maskit}). To construct a wild knot, we build a nested sequence of pearl chain necklaces via the action of $\Gamma_{T^{\circ}}$ and take the inverse limit $\Lambda(K,T^{\circ})$.\\

\noindent Notice that reflecting with respect to each $\Sigma_j$ maps both a mirror image of $K$, denoted $\bar{K}$, and the corresponding pearl chain strand $T-B_j$ into the ball $B_j$. Hence $B_j$ contains a pearl chain strand $\tau_{1_j}=I_j(T-B_j)$ such that the pair $(B_j,\tau_{1_j})$ is homeomorphic to $(C,\bar{K})$, where $C$ is a solid cylinder and $\bar{K}$ is the mirror image of $K$. Thus we obtain a new pearl chain necklace $T_{1_j}=(T-B_j)\cup \tau_{1_j}$, which is obtained from $T$ by replacing the pearl $B_j$ by $\tau^1_j$. This pearl necklace is subordinate to a knot ${\mathrm{K}}_{1_j}$ isotopic to the connected sum of $K$ and its mirror image $\bar{K}$. Similarly we obtain the corresponding beaded strand $\tau^{\circ}_{1_j}=I_j(T^{\circ}-B_j)$ and the beaded necklace $T^{\circ}_{1_j}=(T^{\circ}-B_j)\cup \tau^{\circ}_{1_j}$ (see Figure~\ref{T3}).\\

\begin{figure}[h]  
\begin{center}
\includegraphics[height=4cm]{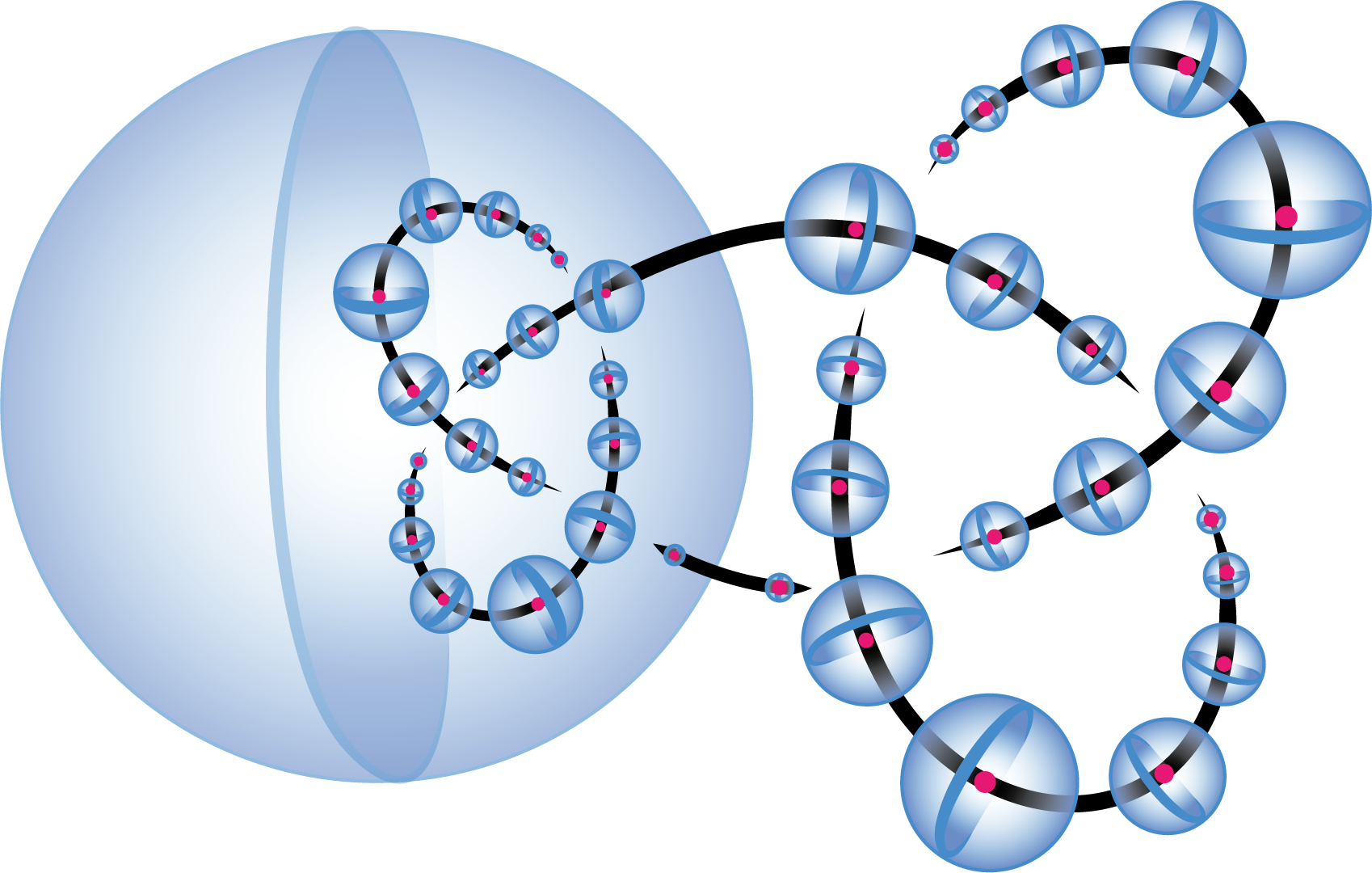}
\end{center}
\caption{\sl A schematic picture of a pearl chain necklace after a reflection.} 
\label{T3}
\end{figure}
 
\noindent After reflecting with respect to each $\Sigma_j$, we obtain a new beaded necklace $T^{\circ}_{1}$ consisting of the union of $l_1=n(n-1)$ pearls $B_j^1$, $j\in\{1,\dots, l_1\}$, subordinate to a new knot ${\mathrm{K}}_{1}$ which is isotopic to the connected sum of $K$ and $n$ copies of its mirror image $\bar{K}$. Let $T_1=T^{\circ}_{1}\cup {\mathrm{K}}_{1}$ be the corresponding pearl chain necklace; then $T^{\circ}_{1}\subset T^{\circ}_{0}=T^{\circ}$ and $T_1\subset T=T_0$.\\

\noindent We continue this process. At the $m$‑th stage we obtain a new necklace $T^{\circ}_{m}$ which is the union of $l_m=n(n-1)^{m}$ pearls $B_j^m$, $j\in\{1,\dots, l_m\}$, subordinate to a polygonal knot ${\mathrm{K}}_{m}$. Let $T_m=T^{\circ}_{m}\cup {\mathrm{K}}_{m}$ be the corresponding pearl chain necklace; by construction $T^{\circ}_{m}\subset T^{\circ}_{m-1}$ and $T_{m}\subset T_{m-1}$. Each pearl $B_j^{m-1}$ ($j\in\{1,\dots, l_{m-1}\}$) contains a pearl chain strand $\tau_{m_j}$ (a thickened one‑strand non‑trivial tangle) of the same knot type as $K$ or $\bar{K}$, together with a beaded strand $\tau^{\circ}_{m_j}$ that is the union of $n-1$ disjoint pearls subordinate to the corresponding knotted arc. The union of all strands $\tau^{\circ}_{m_j}$ is $T^{\circ}_{m}$.\\

\noindent The inverse limit space
$$
\Lambda(K,T^{\circ})=\varprojlim_{m} T_{m}=\bigcap_{m=0}^{\infty} T_{m}
$$
is a wild knot, called a wild knot of dynamically defined type.

\begin{rem}[Pearls shrink to points, reconciling $\varprojlim$ with $\varinjlim$]\label{shrinkpearls}
Since $\Gamma_{T^{\circ}}$ is generated by reflections through $n\geq2$ pairwise disjoint round spheres, the standard ping-pong (Schottky-type) contraction estimate for such groups (\cite{maskit}, \cite[\S 2]{kap1}) gives constants $C>0$ and $0<\lambda<1$, depending only on $T^{\circ}$, such that every pearl $B_j^m$ occurring at the $m$-th stage satisfies $\mathrm{diam}(B_j^m)\leq C\lambda^{m}$. Consequently $\max_j\mathrm{diam}(B_j^m)\to0$ as $m\to\infty$, and the nested compacta $T_m$ Hausdorff-converge to $\Lambda(K,T^{\circ})=\bigcap_m T_m$. In particular, if $\mathrm{K}_m\subset T_m$ denotes the polygonal ``skeleton'' knot subordinate to $T_m$ (i.e. $T_m$ with the pearls forgotten), then $\mathrm{K}_m\to\Lambda(K,T^{\circ})$ in the Hausdorff metric as well, since $T_m$ is a regular neighborhood of $\mathrm{K}_m$ of radius $\leq C\lambda^{m}\to0$. This is the precise sense in which the a priori \emph{decreasing}, nested-intersection definition $\Lambda(K,T^\circ)=\bigcap_m T_m$ agrees with the \emph{increasing}, embedded-limit description $\Lambda(K,T^\circ)=\lim_m \mathrm K_m$ used informally below (and in \cite{gaby}, \cite{GA}): both refer to the same Hausdorff limit, and Lemma \ref{embed} applies to realize it as an honest embedding $\mathbb S^1\to\mathbb R^3$, exactly as it did for $\mathcal S$ and $\mathcal K$ in Sections 3.1--3.2, since the successive modifications $\mathrm{K}_{m-1}\rightsquigarrow \mathrm{K}_m$ are supported in the pearls $B_j^{m-1}$, which are pairwise disjoint and of diameter $\leq C\lambda^{m-1}\to0$.
\end{rem}

\begin{main2}\label{main2-restated}
Any wild knot of dynamically defined type is isotopic to a wild knot contained in the Menger sponge such that its set of wild points is contained in a tame Cantor set contained in $M$.
\end{main2}

\noindent{\it Proof.} Let $K$ be a tame knot. Let $T$ be an $n$-pearl chain necklace subordinate to $K$ ($n\geq3$ arbitrary), and let $\Lambda(K,T^{\circ})$ be the corresponding wild knot of dynamically defined type. We show directly, for the given $n$, that $\Lambda(K,T^{\circ})$ is isotopic to a knot $\mathcal{K}$ contained in the Menger sponge. \\


\noindent Consider the squareflake curve $\mathcal{S}$. Let $T=\{0\}\times [\frac{1}{3},\frac{2}{3}]\times \{0\}$ be the middle‑third segment of the left edge 
$(0,0,0)$-$(0,1,0)$, and let $Q$ be a cube belonging to the second stage $M_2$ of the Menger sponge such that $T\subset Q$. Notice that $T$ lies entirely on $\mathcal{S}$. The cube $Q$ is homothetically equivalent to $M$; by the argument used in Theorem A, there exists an isotopic copy of our knot $K$, say $K'$, contained in $M\cap Q$, with an unknotted segment $e$ of $K'$ lying in $T$ (see Figure \ref{knotMS}). This allows us to define $K_{0}$ as the connected sum of $\mathcal{S}$ with $K'$ along $e$:
$$
K_0\cong {\mathcal{S}}\#K'.   
$$
By construction $K_0$ lies in the Menger sponge and is isotopic to $K$ via an ambient isotopy $H_0:\mathbb{R}^3\times [0,1]\rightarrow\mathbb{R}^3$.\\

\begin{figure}[h] 
\begin{center}
\includegraphics[height=8cm]{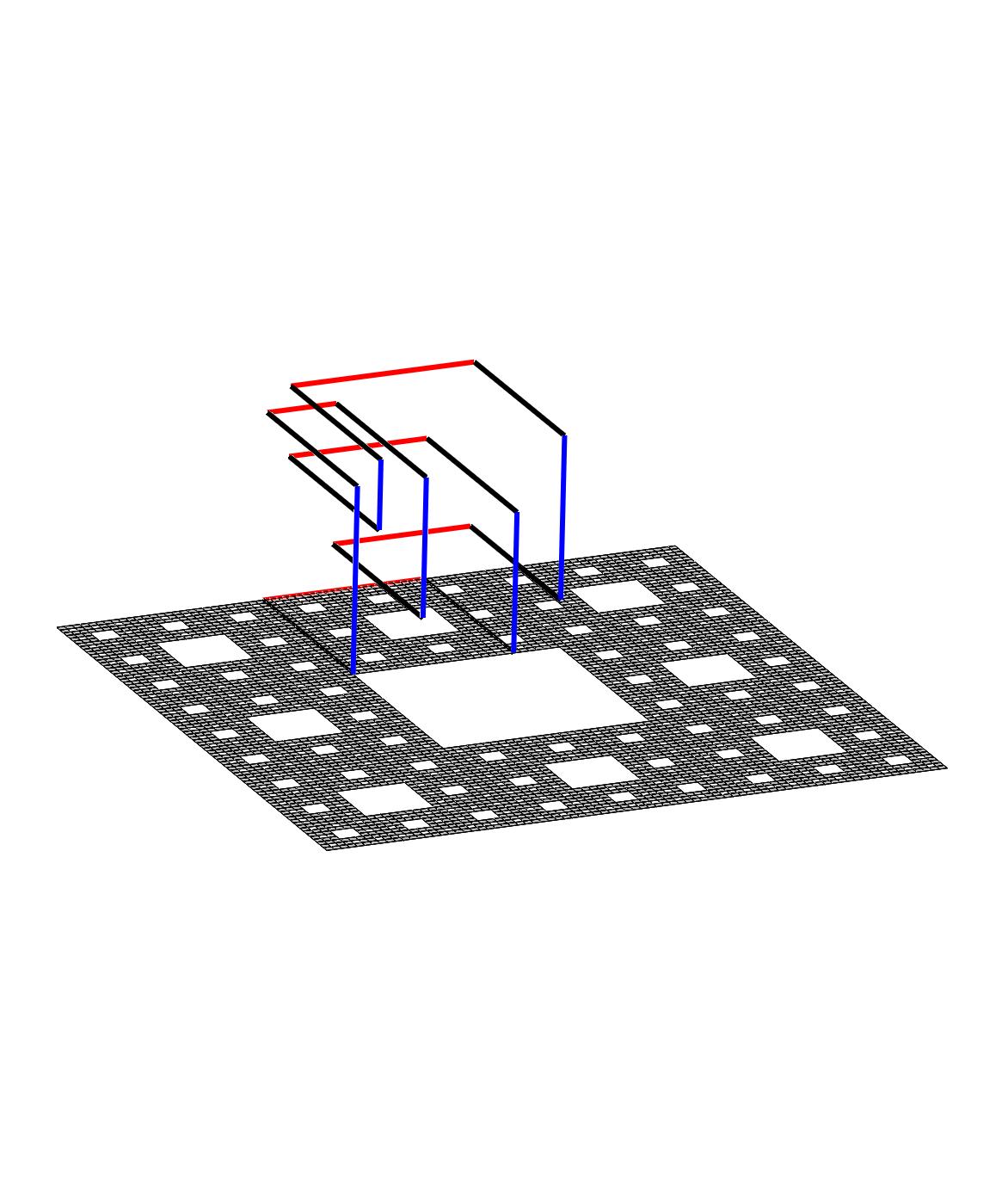}
\end{center}
\caption{\sl The figure-eight knot on $M\cap Q$.} 
\label{knotMS}
\end{figure}. 

\noindent We now compare the two constructions stage by stage, matching the branching combinatorics of $\Gamma_{T^\circ}$ (recalled above: $l_0:=n$ pearls initially, and $l_m=n(n-1)^m$ pearls at stage $m$, so that $l_{m-1}$ new knotted summands are introduced at stage $m$, for every $n\geq2$) using the insertion device of Remark \ref{differentknots}(2) with a \emph{stage-dependent} number of insertions, rather than the fixed count ``$3,6,12,\ldots$'' of Section 3.2 (which is tied to the specific doubling combinatorics of the squareflake curve and only matches $\Gamma_{T^\circ}$ when $n=3$).\\

\noindent {\it First stage.} On the segment $T\subset\mathcal S$ used to build $K_0$, apply Remark \ref{differentknots}(2) with parameter $n$: subdivide $T$ into $2n+1$ subsegments and choose $n$ non-consecutive ones, $T_{1_1},\ldots,T_{1_n}$, each lying, as a side, on a cube $Q_{1_i}$ of the corresponding stage of $M$. By Theorem A there are isotopic copies $K_{1_i}$ of $\bar K$, $i=1,\ldots,n$, contained in $M\cap Q_{1_i}$, pairwise disjoint, each with an unknotted marked segment $e_{1_i}\subset T_{1_i}$; we define
$$
K_1\cong K_0\#K_{1_1}\#\cdots\#K_{1_n}.
$$
Thus $K_1$ is isotopic to the connected sum of $K$ with $n$ copies of $\bar K$, matching the necklace's $\mathrm K_1\cong K\#(n\text{ copies of }\bar K)$ exactly, and $K_1$ lies in the Menger sponge. On each of the $n$ new summands $K_{1_i}$ we further mark $n-1$ non-consecutive subsegments $T_{2_{i,1}},\ldots,T_{2_{i,n-1}}$ (again via Remark \ref{differentknots}(2), now with parameter $n-1$, applied to a middle-third segment of $K_{1_i}$'s own defining cube), for a total of $n(n-1)=l_1$ marked segments available for the second stage.\\

\noindent {\it $m$-th stage ($m\geq2$).} Inductively, suppose $K_{m-1}$ has been built together with $l_{m-1}=n(n-1)^{m-1}$ pairwise disjoint marked segments $T_{m_1},\ldots,T_{m_{l_{m-1}}}$, one on each of the $l_{m-1}$ summands introduced at stage $m-1$ (with, if $m\geq3$, $n-1$ marked segments per summand of the previous stage, by induction). By Theorem A, insert an isotopic copy $K_{m_i}$ of $K$ or $\bar K$ (matching the parity/orientation convention of the necklace construction: the mirror image is applied at each successive reflection) into $M\cap Q_{m_i}$ along each $T_{m_i}$, $i=1,\ldots,l_{m-1}$, pairwise disjoint, defining
$$
K_m\cong K_{m-1}\#K_{m_1}\#\cdots\#K_{m_{l_{m-1}}},
$$
a knot lying in the Menger sponge and isotopic to the same connected sum of copies of $K$ and $\bar K$ as the necklace's $\mathrm K_m$. On each of the $l_{m-1}$ new summands, mark $n-1$ further non-consecutive subsegments via Remark \ref{differentknots}(2) with parameter $n-1$, giving $l_{m-1}(n-1)=l_m$ marked segments for stage $m+1$.\\

\noindent {\bf The embedded limit and the isotopy.} By construction, $K_{m-1}$ and $K_m$ agree outside the union of the $l_{m-1}$ cubes $Q_{m_i}$ used at stage $m$, and (exactly as in Sections 3.1--3.2) once a segment is used for an insertion it is never modified again. As in Lemma \ref{embed} and Lemma \ref{Kembed}, this produces an honest embedded limit $\mathcal K=\Phi(\mathbb S^1)\subset M$, $\mathcal K=\lim_m K_m$ in the Hausdorff metric, \emph{provided} $\mathrm{diam}(Q_{m_i})\to0$ as $m\to\infty$ uniformly in $i$ -- which holds because $Q_{m_i}$ is a cube of a Menger-sponge stage whose depth increases with $m$ by construction (each round of Remark \ref{differentknots}(2) passes to a further stage of $M$). By Remark \ref{shrinkpearls}, the corresponding skeleton knots $\mathrm K_m$ subordinate to $T_m$ satisfy $\mathrm K_m\to\Lambda(K,T^\circ)$ in the Hausdorff metric as well, at the geometric rate $C\lambda^m$.

\smallskip
\noindent By construction there are, for each $m$, ambient isotopies $H_m:\mathbb R^3\times[0,1]\to\mathbb R^3$ with $H_m(K_{m-1},1)=K_m$-side data matched to $\mathrm K_{m-1}\to\mathrm K_m$ (both are obtained from the previous stage by the same connected-sum operation along corresponding segments, so the identification $K_m\leftrightarrow \mathrm K_m$ extends to an isotopy of a neighborhood of the $m$-th stage summands), and, crucially, we choose $H_m$ to be the identity outside a small regular neighborhood $N_m$ of $\bigcup_i(Q_{m_i}\cup B_j^{m-1})$ (the union of the stage-$m$ Menger cubes and the corresponding stage-$(m-1)$ pearls), which has $\mathrm{diam}(N_m)\to0$ and whose components are pairwise disjoint for different $m$ once $m$ is large (since both the $Q_{m_i}$ and the $B_j^{m-1}$ shrink to the finitely many, then countably many, points of the respective limit Cantor sets). This support control -- not available in the original argument, where $H=\lim H_m$ was asserted without such control -- is exactly what is needed:

\begin{cla}\label{isotopylimit}
The composition $H_t:=\cdots\circ (H_m)_t\circ\cdots\circ(H_2)_t\circ(H_1)_t$ (well defined for each fixed $t\in[0,1]$ because the supports $N_m$ are, after discarding finitely many exceptions and shrinking each $H_m$'s support if necessary, pairwise disjoint and accumulate only on the common limit Cantor set $\mathcal C_\Lambda:=\bigcap_m\bigcup_i B_j^m$) defines an ambient isotopy $H:\mathbb R^3\times[0,1]\to\mathbb R^3$, i.e.\ each $H_t$ is a homeomorphism depending continuously on $t$, with $H_0=\mathrm{id}$.
\end{cla}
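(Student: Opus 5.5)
The plan is to reduce Claim~\ref{isotopylimit} to the infinite-composition argument already used for $h$ in the proof of local flatness of $\mathcal S$ along $\mathcal C$, now carried out with a time parameter. The two ingredients are pairwise disjointness of the supports $N_m$ and $\operatorname{diam} N_m\to 0$ (more precisely, $\max_{\text{components}}\operatorname{diam}\to 0$). These are secured by the shrinking of the Menger cubes $Q_{m_i}$ and of the pearls $B_j^{m-1}$ (Remark~\ref{shrinkpearls}).

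\emph{Step 1: normalize the supports.} Discard the finitely many exceptional stages by composing them into a single isotopy $G$ with compact support; this is a finite composition and hence clearly an ambient isotopy. Shrink each remaining $H_m$ so that $N_m$ is a finite disjoint union of small regular neighborhoods $N_{m,i}$. Arrange that $N_{m}\cap N_{m'}=\emptyset$ for $m\neq m'$, or at least that $N_{m'}$ for $m'>m$ lies in the part of $N_m$ where $H_m$ is already the identity. This makes the composition order irrelevant pointwise. Finally, take $\sup_i\operatorname{diam}N_{m,i}\le \varepsilon_m$ with $\varepsilon_m\to0$ (e.g.\ $\varepsilon_m\lesssim 3^{-m}+C\lambda^{m}$).

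\emph{Step 2: pointwise definition and continuity off the limit set.} Write $X=\overline{\bigcup_m N_m}\setminus\bigcup_m N_m$, which is contained in $\mathcal C_\Lambda\cup\mathcal C$. Every point $p\notin X$ has a neighborhood meeting at most one $N_m$, or finitely many. On that neighborhood $H_t=(H_m)_t$ for all $t$, so $(p,t)\mapsto H_t(p)$ is locally a finite composition and hence jointly continuous. For $p\in X$, set $H_t(p)=p$. This is consistent because every $N_m$ avoids $X$.

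\emph{Step 3: continuity at $X$ and homeomorphism (the main obstacle).} Each $(H_m)_t$ maps every component $N_{m,i}$ to itself. Hence $|H_t(q)-q|\le \varepsilon_m$ for $q\in N_m$, uniformly in $t$. Given $p\in X$ and $\varepsilon>0$, choose $m_0$ with $\varepsilon_m<\varepsilon$ for $m\ge m_0$. Then choose $\delta$ so that $B(p,\delta)$ meets no $N_m$ with $m<m_0$; this is possible since finitely many compact sets each miss $p$. For $|q-p|<\delta$ we get $|H_t(q)-p|<\delta+\varepsilon$ uniformly in $t$, which gives joint continuity at $X\times[0,1]$. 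The same construction with the inverses $(H_m)_t^{-1}$, which have the same supports, yields $H_t^{-1}$, continuous by the identical estimate. So each $H_t$ is a homeomorphism. Finally, $H_0=\mathrm{id}$ because each $(H_m)_0=\mathrm{id}$.

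The delicate point I expect is Step 1. One must ensure that the pieces $B_j^{m-1}$, which live near $\Lambda$, and the cubes $Q_{m_i}$, which live near $\mathcal K$, can be joined inside $N_m$ by thin tubes without creating intersections across stages. I would handle this by routing the connecting tubes for stage $m$ inside the region where all earlier $H_{m'}$ are already the identity. I would then shrink the tubes' radii geometrically, so that the diameter bound $\varepsilon_m$ still holds componentwise. If joining the tubes makes components non-small, I would instead keep $H_m$ itself split into two isotopies, one supported near the cubes and one near the pearls, and apply the argument above to each family separately.
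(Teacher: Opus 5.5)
Your Steps~2--3 are correct once the support properties are granted, and they are the paper's argument: local finiteness off the accumulation set, a tail bound on it that is uniform in $t$, and the inverse built from the $(H_m)_t^{-1}$. Strict disjointness even spares you the paper's summability condition $\sum_m\mathrm{diam}(N_m)<\infty$. Your use of component diameters, and of the Menger-side Cantor set in $X$, also corrects two inaccuracies in the paper. First, $\mathrm{diam}(N_m)$ itself does not tend to $0$, since $N_m$ contains every stage-$m$ cube and every stage-$(m-1)$ pearl. Second, the $N_m$ also accumulate where the cubes $Q_{m_i}$ do, not only on $\mathcal C_\Lambda$.

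The gap is Step~1, where you expected it, and neither of your repairs closes it.
\begin{itemize}
\item If $H_m$ is to carry the summand in $Q_{m_i}$ to the strand in $B_j^{m-1}$, some component of its support must contain both. That component has diameter at least $\mathrm{dist}(Q_{m_i},B_j^{m-1})$ however thin the tube, and $H_m$ moves points by about that much.
\item Splitting $H_m$ into an isotopy near the cubes and one near the pearls leaves each region invariant, so it transports nothing.
\item No shrinking makes the supports disjoint, in either of your two senses. We have $B_{j'}^{m}\subset B_j^{m-1}$, and the knotted strand $\tau_{m_j}$ that $H_m$ must produce inside $B_j^{m-1}$ runs through these stage-$m$ pearls, on which $H_{m+1}$ also acts.
\item On the cube side likewise, the stage-$(m+1)$ cubes hang off the summands $K_{m_i}$ that $H_m$ moves.
\end{itemize}
The paper merely asserts these support properties, so the gap is shared. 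But your Step~3 rests on disjointness even more than the paper's summed tail does.

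What works is to drop disjointness and compose in the moving frame.
\begin{itemize}
\item Put $f_m=(H_m)_1\circ f_{m-1}$, with $H_m$ supported in small neighborhoods of the sets $f_{m-1}(Q_{m_i})$. Arrange inductively that these sets lie in the stage-$(m-1)$ pearls.
\item Then $\|f_m-f_{m-1}\|\lesssim\lambda^{m-1}$ by Remark~\ref{shrinkpearls}.
\item Also $\|f_m^{-1}-f_{m-1}^{-1}\|\lesssim\max_i\mathrm{diam}\,Q_{m_i}$, because both values lie in $f_{m-1}^{-1}$ of a single support component. This second bound is exactly what the paper's ``same argument applied to $H_m^{-1}$'' leaves out.
\item Hence $f_m\to f$ and $f_m^{-1}\to g$ uniformly. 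Moreover $f\circ g=g\circ f=\mathrm{id}$, since $|f(f_k^{-1}(y))-y|\le\|f-f_k\|$ and $|g(f_k(x))-x|\le\|g-f_k^{-1}\|$.
\item Finally, run each $H_m$ on its own time interval $[1-2^{1-m},1-2^{-m}]$, instead of running all $H_m$ simultaneously. This gives the ambient isotopy, continuous at $t=1$ by the same uniform estimates.
\end{itemize}
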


\noindent{\it Proof of Claim.} Fix $t$. For $x\notin\mathcal C_\Lambda$, $x$ has a neighborhood meeting only finitely many $N_m$ (since $\mathrm{diam}(N_m)\to0$ and the $N_m$ accumulate only on $\mathcal C_\Lambda$), so $H_t$ agrees, near $x$, with a finite composition of homeomorphisms and is therefore a homeomorphism near $x$. For $x\in\mathcal C_\Lambda$: given $\varepsilon>0$, choose $M_0$ with $\mathrm{diam}(N_m)<\varepsilon$ for all $m\geq M_0$; then for $y$ within $\varepsilon$ of $x$, all the isotopies $H_m$, $m\geq M_0$, that could possibly move $y$ do so by less than $\mathrm{diam}(N_m)<\varepsilon$ (they are supported in balls of that diameter), so $|H_t(y)-y|$ receives contributions only from finitely many $H_m$ ($m<M_0$, each individually continuous) plus a tail bounded by $\sum_{m\geq M_0}\mathrm{diam}(N_m)$, which is small if the $N_m$ are chosen (as we may, shrinking $H_m$'s support if necessary) with $\sum_m\mathrm{diam}(N_m)<\infty$ (possible since $\mathrm{diam}(N_m)=O(\lambda^m)$ geometrically by Remark \ref{shrinkpearls}). This gives continuity of $H_t$ at $x$, uniformly in $t$; the same argument applied to $H_m^{-1}$ (which exists since each $H_m$ is an ambient isotopy) gives a continuous inverse, and joint continuity in $(x,t)$ follows since each $H_m$ is a genuine isotopy (continuous in $t$) and the tail estimate is uniform in $t$. $\square$

\smallskip
\noindent We claim that $H_1(\mathcal{K})=\Lambda(K,T^{\circ})$. If $x\in\mathcal K\setminus\mathcal C_\Lambda$, then $x\in K_m\setminus\mathcal C_\Lambda$ for some minimal $m$, and for $m'>m$ the isotopies $H_{m'}$ do not move $x$ (their supports shrink away from $x$ once $x$ is a positive distance from $\mathcal C_\Lambda$), so $H_1(x)=H_m(\cdots H_1(x)\cdots)$ is computed by a \emph{finite} composition and lands, by construction of each $H_j$, in $\mathrm K_m\subset T_m$; letting $m\to\infty$ over such $x$ and using density of $\mathcal K\setminus\mathcal C_\Lambda$ in $\mathcal K$ together with continuity of $H_1$ (Claim \ref{isotopylimit}) and closedness of $\Lambda(K,T^\circ)=\bigcap_m T_m$, we get $H_1(\mathcal K)\subset\Lambda(K,T^\circ)$. The reverse inclusion follows the same way using $H_1^{-1}$, built from the $H_m^{-1}$ by the same shrinking-support argument. Hence $H_1$ restricts to a homeomorphism $\mathcal K\to\Lambda(K,T^\circ)$, and $H=\{H_t\}$ is an ambient isotopy of $\mathbb R^3$ carrying $\mathcal K$ onto $\Lambda(K,T^{\circ})$. Since $\mathcal{K}$ lies in the Menger sponge, $\Lambda(K,T^{\circ})$ is isotopic to a wild knot contained in the Menger sponge. By Lemma \ref{wild} (applied with the roles of $\mathcal C$ played by the image under $H_1$ of the corresponding tame Cantor set), its wild point set is contained in the image of a tame Cantor set in $M$, as claimed. $\square$

\begin{rem}
If one wants the wild-point Cantor set to lie on the specific edge $e$ used in Theorem 1 (rather than merely somewhere in $M$), it suffices to choose the initial segment $T$ in the construction of $K_0$ above to be a middle-third-type sub-segment of $e$ itself (in place of the left edge $(0,0,0)$--$(0,1,0)$ used above) and to route every subsequent application of Remark \ref{differentknots}(2) through cubes attached to $e$; the same argument then places the wild points inside a Cantor subset of $e$.
\end{rem}

\subsection{Wild knots embedded in a 2-dimensional cubical Sierpi\'nski compactum ${\mathcal S}^2$}

We close by observing that the whole discussion above goes through, essentially unchanged, for a different, two-dimensional fractal continuum:

\begin{definition}
Let $I^3=[0,1]^3$. Define $Q_0=I^3$, and, inductively, obtain $Q_{n+1}$ from $Q_n$ by subdividing every subcube of $Q_n$ (of side length $3^{-n}$) into $27$ equal subcubes of side length $3^{-(n+1)}$, and removing the \emph{open interior of only the single, central subcube} of each such subdivision (so that $Q_{n+1}$ retains $26$ of the $27$ subcubes at each site, as opposed to the $20$ retained in the Menger sponge construction of Section 2.1, which additionally removes the six face-centered subcubes). Set
$$
{\mathcal S}^2=\bigcap_{n=0}^{\infty} Q_n.
$$
We call ${\mathcal S}^2$ the \emph{cubical Sierpi\'nski compactum}; this is a compact, connected continuum.
\end{definition}

\noindent Three clarifications, for precision:
\begin{enumerate}
\item[(i)] \emph{This is not the classical Sierpi\'nski carpet.} The classical Sierpi\'nski carpet is a planar continuum (a subset of $I^2$, or of a single face $I^2\times\{0\}\subset\mathbb R^3$) of topological dimension one, obtained by removing $1$ of $9$ subsquares at each stage; see \cite{whyburn} for its topological characterization. ${\mathcal S}^2$, by contrast, is a genuinely three-dimensional construction (a subset of the solid cube $I^3$, not of a face) of topological dimension \emph{two}: since only the single central subcube is ever removed at each site, every $2$-dimensional ``wall'' shared by two retained subcubes at every stage and every scale survives intact into $Q_{n+1}$ and hence into ${\mathcal S}^2$ (in particular $\partial I^3\subset {\mathcal S}^2$, and more generally the boundary of every retained subcube at every stage lies in ${\mathcal S}^2$ except for the small cubical cavity left by that subcube's own removed center); a standard dimension-theoretic argument (using, e.g., the countable sum theorem for covering dimension, see \cite{lipscomb}) then gives $\dim{\mathcal S}^2=2$, since ${\mathcal S}^2$ contains a countable, dense-in-itself union of embedded $2$-cells (giving $\dim\geq2$) while having empty interior in $\mathbb R^3$, as every open ball of $I^3$ eventually contains a removed central subcube at some sufficiently fine stage (giving $\dim\leq2$).
\item[(ii)] \emph{This is not the Menger sponge.} Since $\dim M=1\neq2=\dim{\mathcal S}^2$, $M$ and ${\mathcal S}^2$ are not homeomorphic.
\item[(iii)] \emph{Relation between the two.} At every stage, the Menger sponge removes a strict superset of what ${\mathcal S}^2$ removes (the center subcube, common to both constructions, plus the six face-centered subcubes, removed only for $M$); hence $M\subset {\mathcal S}^2$.
\end{enumerate}

\noindent This last containment is all that the corollaries below need; we do not otherwise use any special structure of ${\mathcal S}^2$.

\smallskip
\noindent We are not aware of a single settled name for ${\mathcal S}^2$ in the literature on higher-dimensional generalizations of the Sierpi\'nski carpet and Menger sponge (see \cite{lipscomb} for general background on universal spaces in dimension theory, and \cite{YFY} for a different, affine-invariant family of generalizations); we use ``cubical Sierpi\'nski compactum'' purely as a working name in this paper, and would welcome a pointer to established terminology from readers more familiar with this corner of the literature.

\begin{coro}
There exist infinitely many nonequivalent, explicitly constructed wild knots embedded in the cubical Sierpi\'nski compactum ${\mathcal S}^2$, with prescribed finite wild point sets.
\end{coro}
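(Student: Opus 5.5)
The plan is to deduce the corollary directly from Theorem 1 together with the containment $M\subset{\mathcal S}^2$ noted in clarification (iii); no new construction is needed.

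\emph{First}, I would justify the inclusion $M\subset{\mathcal S}^2$ stage by stage. By induction on $n$, the $n$-th Menger stage $M_n$ satisfies $M_n\subset Q_n$. Each subcube of $M_n$ is a subcube of $Q_n$. Passing to stage $n+1$, $M_{n+1}$ removes from that subcube the interior of the central piece and of the six face-centered pieces, whereas $Q_{n+1}$ removes only the central piece. Hence the retained pieces of $M_{n+1}$ form a subfamily of those of $Q_{n+1}$. Intersecting over $n$ gives $M\subset{\mathcal S}^2$.

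\emph{Second}, I would fix a finite set $T=\{p_1,\dots,p_n\}$ in the Cantor set $\mathcal C\subset e$ and apply Theorem 1. This gives the explicit knot $\mathcal K_T\subset M\subset{\mathcal S}^2$ whose wild point set is exactly $T$. The key point is that wildness and local flatness are properties of the pair $(\mathbb S^3,\mathcal K_T)$ (Definition of locally flat point in Section 2.2). They do not depend on the ambient subcontinuum in which the knot happens to lie. So the wild set is still exactly $T$ when $\mathcal K_T$ is regarded as a knot in ${\mathcal S}^2$, and explicitness is inherited verbatim from the recursive construction of Section 3.2.

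\emph{Third}, I would obtain infinitely many non-equivalent knots by letting $|T|=n$ range over $\mathbb N$. This is possible since $\mathcal C$ is infinite. Any homeomorphism $\varphi$ of $\mathbb S^3$ carrying one knot onto another maps wild points to wild points, so knots with wild sets of different cardinalities are non-equivalent. Equivalence here is taken in the sense of Section 2.2, by homeomorphisms of $\mathbb S^3$.

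The only step needing care is the first one. Even there the obstacle is merely bookkeeping: one must check that the subdivision grids of the two constructions coincide at every level, which holds because both constructions use the triadic subdivision of $I^3$. Once that is in place, the rest is a formal consequence of Theorem 1.
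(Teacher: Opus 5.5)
Your proposal is correct and follows the paper's own proof: the corollary is deduced from Theorem 1 together with the containment $M\subset{\mathcal S}^2$ of clarification (iii). Your induction (every retained cube of $M_n$ is a retained cube of $Q_n$) and your remark that wildness is a property of the pair $(\mathbb S^3,\mathcal K_T)$, independent of the ambient continuum, make explicit what the paper leaves as ``immediate.''
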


\noindent {\it Proof.} Immediate from Theorem 1 and $M\subset{\mathcal S}^2$. $\square$

\begin{coro}
Any wild knot of dynamically defined type is isotopic to a wild knot contained in the cubical Sierpi\'nski compactum ${\mathcal S}^2$.
\end{coro}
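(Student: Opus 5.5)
The plan is to deduce this corollary from Theorem 2 together with the containment $M\subset{\mathcal S}^2$ of clarification (iii). This is the same pattern as the previous corollary, which was deduced from Theorem 1. Let $\Lambda(K,T^{\circ})$ be a wild knot of dynamically defined type, built from a tame knot $K$ and an $n$-beaded necklace $T^{\circ}$. By Theorem 2, there is an ambient isotopy $H:\mathbb R^3\times[0,1]\to\mathbb R^3$ and a wild knot $\mathcal K\subset M$ such that $H_1(\mathcal K)=\Lambda(K,T^{\circ})$. This $\mathcal K$ is the embedded limit produced by Lemma \ref{embed}, Lemma \ref{Kembed} and Claim \ref{isotopylimit}. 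Since $M\subset{\mathcal S}^2$, it follows that $\mathcal K\subset{\mathcal S}^2$. Therefore $\Lambda(K,T^{\circ})$ is isotopic to a wild knot contained in ${\mathcal S}^2$.

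The only step that deserves care is the containment $M\subset{\mathcal S}^2$, which I would verify stage by stage by induction: I claim $M_k\subset Q_k$ for every $k$. For $k=0$ both sets equal $I^3$. For the inductive step, assume $M_k\subset Q_k$. Each cube of $M_k$ is a side-$3^{-k}$ cube of the common triadic grid. Its subdivision into $27$ subcubes is therefore the same in both constructions. The sponge discards $7$ of these subcubes (the center and the six face centers), while ${\mathcal S}^2$ discards only the center. Hence the retained part of each cube of $M_k$ lies in $Q_{k+1}$, which gives $M_{k+1}\subset Q_{k+1}$.

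One subtlety is that a cube of $M_k$ lies in $Q_k$, but it must actually be one of the retained subcubes of $Q_k$, so that the ${\mathcal S}^2$ subdivision acts on it. This holds because both constructions retain whole grid cubes and $M_k\subset Q_k$. Intersecting over $k$ then gives $M\subset{\mathcal S}^2$.

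I do not expect a real obstacle here; all the topological content, namely the embedded limit and the control of the isotopy supports, is already contained in Theorem 2. I would also record that the wild point set stays inside a tame Cantor set. Theorem 2 places this Cantor set in $M$, and hence it also lies in ${\mathcal S}^2$.
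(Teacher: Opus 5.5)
Your proposal is correct and follows the paper's own route. The paper deduces the corollary directly from Theorem 2 together with the containment $M\subset{\mathcal S}^2$ of clarification (iii); your stage-by-stage induction $M_k\subset Q_k$, including the check that every grid cube of $M_k$ is a retained cube of $Q_k$, just makes that one-line containment explicit.
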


\noindent {\it Proof.} It is a consequence of Theorem 2. $\square$

\noindent G. Hinojosa. {\tt Centro de Investigaci\'on en Ciencias}. Instituto de Investigaci\'on en Ciencias B\'asicas y Aplicadas. Universidad Aut\'onoma del Estado de Morelos. Av. Universidad 1001, Col. Chamilpa.
Cuernavaca, Morelos, M\'exico, 62209. 

\noindent {\it E-mail address:} gabriela@uaem.mx 

\vskip .3cm

\noindent U. Morales-Fuentes. {\tt Centro de Investigaci\'on en Ciencias}. Instituto de Investigaci\'on en Ciencias B\'asicas y Aplicadas. Universidad Aut\'onoma del Estado de Morelos. Av. Universidad 1001, Col. Chamilpa.
Cuernavaca, Morelos, M\'exico, 62209. 

\noindent {\it E-mail address:} ulises.morales@uaem.mx 

\vskip .3cm

\noindent R. Valdez. {\tt Centro de Investigaci\'on en Ciencias}. Instituto de Investigaci\'on en Ciencias B\'asicas y Aplicadas. Universidad Aut\'onoma del Estado de Morelos. Av. Universidad 1001, Col. Chamilpa.
Cuernavaca, Morelos, M\'exico, 62209. 

\noindent {\it E-mail address:} valdez@uaem.mx 
\vskip .3cm
\noindent A. Verjovsky. {\tt Instituto de Matem\'aticas Unidad
 Cuernavaca}. Universidad Nacional Aut\'onoma de M\'exico. Av. Universidad s/n. Col. Lomas de Chamilpa C\'odigo Postal 
 62210, Cuernavaca, Morelos.
 
\noindent {\it E-mail address:} albertoverjovsky@gmail.com

\end{document}